\documentclass[oneside,11pt]{amsart}
\usepackage{amsmath, amsfonts,amsthm,times,graphics}
\usepackage{mathrsfs}
\usepackage[usenames]{color}
\usepackage[active]{srcltx}
 \makeatletter
\renewcommand*\subjclass[2][2000]{%
  \def\@subjclass{#2}%
  \@ifundefined{subjclassname@#1}{%
    \ClassWarning{\@classname}{Unknown edition (#1) of Mathematics
      Subject Classification; using '1991'.}%
  }{%
    \@xp\let\@xp\subjclassname\csname subjclassname@#1\endcsname
  }%
}
 \makeatother

\newtheorem*{ThmA}{Theorem A}
\newtheorem*{ThmB}{Theorem B}
\newtheorem*{ThmC}{Theorem C}

\newtheorem{Thm}{Theorem}[section]
\newtheorem{Cor}[Thm]{Corollary}
\newtheorem{Lem}[Thm]{Lemma}

\theoremstyle{definition}

\theoremstyle{remark}



\numberwithin{equation}{section}

\newcommand{\ee}{\mathrm{e}}

\theoremstyle{definition}

\def\be{\begin{equation}}
\def\ee{\end{equation}}

\newcommand{\ben}{\begin{enumerate}}
\newcommand{\een}{\end{enumerate}}




\newcommand{\br}{\begin{rem}}
\newcommand{\er}{\end{rem}}
\newcommand{\brs}{\begin{rems}}
\newcommand{\ers}{\end{rems}}
\newcommand{\bo}{\begin{obser}}
\newcommand{\eo}{\end{obser}}
\newcommand{\bos}{\begin{obsers}}
\newcommand{\eos}{\end{obsers}}
\newcommand{\bpf}{\begin{pf}}
\newcommand{\epf}{\end{pf}}
\newcommand{\ba}{\begin{array}}
\newcommand{\ea}{\end{array}}
\newcommand{\beq}{\begin{eqnarray}}
\newcommand{\beqq}{\begin{eqnarray*}}
\newcommand{\eeq}{\end{eqnarray}}
\newcommand{\eeqq}{\end{eqnarray*}}



\numberwithin{equation}{section}


\newcounter{minutes}\setcounter{minutes}{\time}
\divide\time by 60
\newcounter{hours}\setcounter{hours}{\time}
\multiply\time by 60 \addtocounter{minutes}{-\time}
\begin{document}
\title{Remarks on ``Schwarz-type lemma, Landau-type theorem, and Lipschitz-type space of solutions to inhomogeneous biharmonic equations"}

\author[Shaolin Chen and  Hidetaka Hamada]{Shaolin Chen and Hidetaka Hamada}

\address{S. L.  Chen, College of Mathematics and
Statistics, Hengyang Normal University, Hengyang, Hunan 421002,
People's Republic of China; Hunan Provincial Key Laboratory of
Intelligent Information Processing and Application,  421002,
People's Republic of China.} \email{mathechen@126.com}

\address{H. Hamada, Faculty of Science and Engineering, Kyushu Sangyo University,
3-1 Matsukadai 2-Chome, Higashi-ku, Fukuoka 813-8503, Japan.}
\email{hi.hamada01@gmail.com}


\maketitle

\def\thefootnote{}
\footnotetext{2020 Mathematics Subject Classification. Primary:
31A30; Secondary: 35J40.}
\footnotetext{Keywords. Biharmonic equation, Hardy-Littlewood type theorem,  Poisson  kernel}
\makeatletter\def\thefootnote{\@arabic\c@footnote}\makeatother


\begin{abstract}
Let $\varphi$, $\psi\in C(\mathbb{T})$, $g\in C(\overline{\mathbb{D}})$, where $\mathbb{D}$ and $\mathbb{T}$
denote the unit disk and the unit circle, respectively. Suppose that  $f\in C^{4}(\mathbb{D})$ satisfies
the following:
\begin{enumerate}
\item[{\rm (1)}] the inhomogeneous biharmonic equation $ \Delta(\Delta f(z))=g(z)$ for $z\in\mathbb{D}$,
\item[{\rm (2)}] the Dirichlet boundary conditions $\partial_{\overline{z}}f(\zeta)=\varphi(\zeta)$
and $f(\zeta)=\psi(\zeta)$ for $\zeta\in\mathbb{T}$.
\end{enumerate}
Recently, the authors in [J. Geom. Anal. 29: 2469-2491, 2019] showed that if   $\omega$ is a majorant with
$\limsup_{t\rightarrow0^{+}}\left(\omega(t)/t\right)<\infty$, $\psi=0$ and
$\varphi_1 \in\mathscr{L}_{\omega}(\mathbb{T})$, where $\varphi_1(e^{it})=\varphi(e^{it})e^{-it}$ for $t\in[0,2\pi]$,  then
$f\in\mathscr{L}_{\omega}(\mathbb{D})$. The purpose of this paper is to improve and generalize this result.
We not only prove that the condition ``$\limsup_{t\rightarrow0^{+}}\left(\omega(t)/t\right)<\infty$" is redundant,
 but also demonstrate that conditions ``$\psi=0$" and ``$\varphi_1\in\mathscr{L}_{\omega}(\mathbb{T})$" can be replaced by weaker conditions.
\end{abstract}

\maketitle \pagestyle{myheadings} \markboth{ S. L. Chen and H.
Hamada}{Hardy-Littlewood type Theorems of solutions to inhomogeneous biharmonic equations}

\maketitle

\section{ Preliminaries and main results}\label{sec1}

Let $\mathbb{C}$ be the complex plane. For $a\in\mathbb{C}$ and $\rho>0$, let $\mathbb{D}(a,\rho)=\{z:~|z-a|<\rho\}$. In particular, we use $\mathbb{D}_{\rho}$ to
denote the disk $\mathbb{D}(0,\rho)$ and $\mathbb{D}$ to denote the unit disk $\mathbb{D}_{1}$. Moreover, let $\mathbb{T}:=\partial\mathbb{D}$ be the
unit circle. For $z=x+iy\in\mathbb{C}$, the complex formal differential operators
are defined by $\partial/\partial z=1/2(\partial/\partial x-i\partial/\partial y)$ and $\partial/\partial \overline{z}=1/2(\partial/\partial x+i\partial/\partial y)$, where $x,y\in\mathbb{R}$.
For
$\theta\in[0,2\pi]$, the  directional derivative of  a
complex-valued and differentiable function $f$ at $z\in\mathbb{D}$ is defined
by
$$\partial_{\theta}f(z):=\lim_{\rho\rightarrow0^{+}}\frac{f(z+\rho e^{i\theta})-f(z)}{\rho}=\partial_{z}f(z)e^{i\theta}
+\partial_{\overline{z}}f(z)e^{-i\theta},$$ where $\partial_{z}f:=\partial
f/\partial z,$ $\partial_{\overline{z}}f:=\partial f/\partial \overline{z}$
and $\rho$ is a positive real number such that $z+\rho
e^{i\theta}\in\mathbb{D}$. Then
$$\Lambda_{f}(z):=\max\{|\partial_{\theta}f(z)|:\; \theta\in[0,2\pi]\}=|\partial_{z}f(z)|+|\partial_{\overline{z}}f(z)|.
$$

Denote by $C^{m}(\Omega)$ the set of all complex-valued $m$-times continuously differentiable functions from $\Omega$
into $\mathbb{C}$, where $\Omega$ stands for a subset of $\mathbb{C}$ and $m\in\{0,1,2,\ldots\}$. In particular, let $C(\Omega):=C^{0}(\Omega)$
denote the set of all continuous functions in $\Omega$.

Let $\varphi$, $\psi\in C(\mathbb{T})$, $g\in C(\overline{\mathbb{D}})$ and $f\in C^{4}(\mathbb{D})$. Of particular interest to us is the following
inhomogeneous biharmonic equation in $\mathbb{D}$:
\be\label{eq-1.1} \Delta(\Delta f)=g,\ee
and the following its associated Dirichlet boundary value problem:
\begin{align}\label{eq-1.2}
\begin{cases}
\displaystyle \partial_{\overline{z}}f=\varphi, \;\;& \;\; \mbox{in}~\mathbb{T}, \\
\displaystyle f=\psi, \;\;&\;  \;\mbox{in}~\mathbb{T},
\end{cases}
\end{align}
where $$\Delta f:=\frac{\partial^{2} f}{\partial x^{2}}+\frac{\partial^{2} f}{\partial y^{2}}=4\frac{\partial^{2} f}{\partial z\partial\overline{z}}$$
is the Laplacian of $f$. In particular, if $g\equiv0$, then the
solutions to (\ref{eq-1.1}) are called biharmonic mappings (see
\cite{AA,Be,CWY,CLW,GMR-2018,MM,S-2003}).
For $z, w\in\mathbb{D}$, let
$$
G(z,w)=|z-w|^{2}\log\left|\frac{1-z\overline{w}}{z-w}\right|^{2}-(1-|z|^{2})(1-|w|^{2})
$$
and $$P(z,e^{i\theta})=\frac{1-|z|^{2}}{|1-ze^{-i\theta}|^{2}}$$
denote the  biharmonic Green function and the
Poisson kernel, respectively, where $\theta\in[0,2\pi]$.
It follows from \cite[Theorem 2]{Be} that all
solutions to the  inhomogeneous biharmonic equation (\ref{eq-1.1}) satisfying the boundary value conditions
(\ref{eq-1.2}) are given by

\beq\label{eq-ch-3}
f(z)&=&P[\psi](z)+Q[\psi](z)-(1-|z|^{2})P[\varphi_{1}](z)\\
\nonumber
&&-\frac{1}{16\pi}\int_{\mathbb{D}}g(w)G(z,w)dA(w),
\eeq where $dA(w)$ denotes the Lebesgue area measure in
$\mathbb{D}$,
$\varphi_{1}(e^{it})=\varphi(e^{it})e^{-it}$,
$$P[\psi](z)=\frac{1}{2\pi}\int_{0}^{2\pi}P(z,e^{it})\psi(e^{it})dt,$$
\be\label{Q}
Q[\psi](z)=\frac{1}{2\pi}\int_{0}^{2\pi}\overline{z}e^{it}\psi(e^{it})\frac{1-|z|^{2}}{(1-\overline{z}e^{it})^{2}}dt
\ee
and
$$P[\varphi_{1}](z)=\frac{1}{2\pi}\int_{0}^{2\pi}P(z,e^{it})\varphi_{1}(e^{it})dt.$$

The  biharmonic equation arises in areas of continuum mechanics,
including linear elasticity theory and the solution of Stokes flows
(cf. \cite{Ha,Kh-1996, We}). Most important applications of the theory of functions of one complex variable were obtained in the plane theory of elasticity and in the
approximate theory of plates subject to normal loading (cf. \cite{HB-1965, L-1964}).

A continuous increasing function $\omega:[0,\infty)\rightarrow[0,\infty)$ with $\omega(0)=0$ is called a majorant if
$\omega(t)/t$ is non-increasing for $t\in(0,\infty)$ (see \cite{Dy1,Dy2,P}).
Note that for any $c\geq 1$ and $t>0$, we have
$\omega(ct)\leq c\omega(t)$ for every majorant $\omega$.
For $\nu_{0}>0$ and $0<\nu<\nu_{0}$, we consider the
following conditions on a majorant $\omega$:
\be\label{eq2x}
\int_{0}^{\nu}\frac{\omega(t)}{t}\,dt\leq\, M\omega(\nu)
\ee
and
\be\label{eq3x}
\nu\int_{\nu}^{\infty}\frac{\omega(t)}{t^{2}}\,dt\leq\, M
\omega(\nu),
\ee
where $M$ denotes a positive constant. A majorant $\omega$ is henceforth  called fast (resp. slow) if condition (\ref{eq2x}) (resp. (\ref{eq3x}) ) is fulfilled.
In particular, a majorant $\omega$ is said to be  regular if it satisfies the
conditions (\ref{eq2x}) and (\ref{eq3x}) (see \cite{Dy1,Dy2}).

Given a majorant $\omega$ and a subset  $\Omega$ of
$\mathbb{C}$, a function $f$ of  $\Omega$ into $\mathbb{C}$ is
said to belong to the Lipschitz space
$\mathscr{L}_{\omega}(\Omega)$
 if there is a positive constant $M$ such that
 \be\label{rrt-1}|f(z_{1})-f(z_{2})| \leq\,M\omega\left(|z_{1}-z_{2}|\right), \quad
z_{1},z_{2} \in \Omega.\ee
Furthermore, let
$$\|f\|_{\mathscr{L}_{\omega}(\Omega),s}:=\sup_{z_{1},z_{2}\in\Omega,z_{1}\neq\,z_{2}}\frac{|f(z_{1})-f(z_{2})|}{\omega(|z_{1}-z_{2}|)}<\infty.$$
Note that if $\Omega$ is a proper subdomain of $\mathbb{C}$ and
$f\in \mathscr{L}_{\omega}(\Omega)$, then $f$ is continuous on
$\overline{\Omega}$ and (\ref{rrt-1}) holds for $z,w \in
\overline{\Omega}$ (see \cite{Dy2}).
  In particular, we say that
a function $f$ belongs to the  local Lipschitz space
$\mbox{loc}\mathscr{L}_{\omega}(\Omega)$ if (\ref{rrt-1})
holds, with a fixed
positive constant $M$, whenever $z\in \Omega$ and
$|z-w|<\frac{1}{2}d_{\Omega}(z)$ (cf. \cite{Dy2,GM,L}), where $d_{\Omega}(z)$ is the
Euclidean distance between $z$ and the boundary of $\Omega$. Moreover, $\Omega$ is called an  $\mathscr{L}_{\omega}$-extension domain if
$\mathscr{L}_{\omega}(\Omega)=\mbox{loc}\mathscr{L}_{\omega}(\Omega).$
On the geometric
characterization of $\mathscr{L}_{\omega}$-extension domains, see
 \cite{GM}. In \cite{L}, Lappalainen
generalized the characterization of \cite{GM}, and proved that $\Omega$ is an
$\mathscr{L}_{\omega}$-extension domain if and only if each pair of points
$z_{1},z_{2}\in \Omega$ can be joined by a rectifiable curve $\gamma\subset \Omega$
satisfying
$$
\int_{\gamma}\frac{\omega(d_{\Omega}(\zeta))}{d_{\Omega}(\zeta)}\,ds(\zeta)
\leq M\omega(|z_{1}-z_{2}|)
$$
with some fixed positive constant
$M$, where $ds$ stands for the arc length measure on
$\gamma$.  Furthermore, Lappalainen \cite[Theorem 4.12]{L} proved
that $\mathscr{L}_{\omega}$-extension domains  exist only for fast majorants
$\omega$. In particular, $\mathbb{D}$ is an $\mathscr{L}_{\omega}$-extension domain of $\mathbb{C}$ for fast majorant $\omega$ (see \cite{Dy2}).

For complex-valued harmonic functions of $\mathbb{D}$ into $\mathbb{C}$, there is  a classical  Hardy-Littlewood theorem as follows (cf.
 \cite{H-L-31,H-L,Pav-2008}).
 \begin{ThmA}\label{H-L}{\rm (The Hardy-Littlewood's Theorem)}
 If $\varphi\in\Lambda_{\omega_{\beta}}(\mathbb{T})$, then
$P[\varphi]\in\Lambda_{\omega_{\beta}}(\overline{\mathbb{D}})$, where $\beta\in(0,1)$ and $\omega_{\beta}(t)=t^{\beta}$ for $t\geq0$.
 \end{ThmA}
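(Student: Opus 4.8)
The plan is to reduce the boundary Hölder regularity to an interior gradient estimate and then to invoke the extension-domain machinery recalled above; throughout, $\Lambda_{\omega_\beta}$ denotes the Hölder space, which coincides with $\mathscr{L}_{\omega_\beta}$. Since $\int_0^\nu t^{\beta-1}\,dt=\beta^{-1}\nu^\beta=\beta^{-1}\omega_\beta(\nu)$, the function $\omega_\beta(t)=t^\beta$ is a fast majorant, so $\mathbb{D}$ is an $\mathscr{L}_{\omega_\beta}$-extension domain. Consequently it suffices to prove $P[\varphi]\in\mbox{loc}\mathscr{L}_{\omega_\beta}(\mathbb{D})$, after which the extension property together with the continuity of $\varphi$ up to $\mathbb{T}$ propagates the estimate to all of $\overline{\mathbb{D}}$. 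The core of the argument is the pointwise bound
\[ \Lambda_{P[\varphi]}(z)\le C\,M\,(1-|z|)^{\beta-1},\qquad z\in\mathbb{D}, \]
where $M=\|\varphi\|_{\mathscr{L}_{\omega_\beta}(\mathbb{T}),s}$ and $C=C(\beta)$.

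To establish this gradient bound, I write $z=re^{i\theta}$ and differentiate under the integral sign. Because $P[1]\equiv1$, we have $\int_0^{2\pi}\partial_z P(z,e^{it})\,dt=\int_0^{2\pi}\partial_{\overline{z}}P(z,e^{it})\,dt=0$, so subtracting the constant $\varphi(e^{i\theta})$ gives
\[ \partial_z P[\varphi](z)=\frac{1}{2\pi}\int_0^{2\pi}\partial_z P(z,e^{it})\,[\varphi(e^{it})-\varphi(e^{i\theta})]\,dt, \]
and likewise for $\partial_{\overline{z}}$. Since $P(z,e^{it})$ is, up to a constant, the real part of $(e^{it}+z)/(e^{it}-z)$, a direct computation yields the kernel identity $|\partial_z P(z,e^{it})|+|\partial_{\overline{z}}P(z,e^{it})|=2/|1-ze^{-it}|^2$. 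Feeding in the hypothesis $|\varphi(e^{it})-\varphi(e^{i\theta})|\le M|e^{it}-e^{i\theta}|^\beta$ together with the comparison $|1-ze^{-it}|^2\asymp(1-r)^2+(t-\theta)^2$ (valid for $r\ge 1/2$) reduces the bound, via the scaling $s=(1-r)u$, to
\[ \int_{-\pi}^{\pi}\frac{|s|^\beta}{(1-r)^2+s^2}\,ds=(1-r)^{\beta-1}\int_{-\pi/(1-r)}^{\pi/(1-r)}\frac{|u|^\beta}{1+u^2}\,du. \]
Extending the last integral to all of $\mathbb{R}$, it is dominated by $(1-r)^{\beta-1}\int_{\mathbb{R}}|u|^\beta(1+u^2)^{-1}\,du$, and this final integral converges \emph{exactly} because $\beta<1$ (the integrand decays like $|u|^{\beta-2}$). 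This convergence is the one indispensable use of the hypothesis $\beta\in(0,1)$ and is the main obstacle of the proof. For $r\le 1/2$ the claimed estimate is trivial, since $\Lambda_{P[\varphi]}$ is bounded on $\overline{\mathbb{D}_{1/2}}$ while $(1-r)^{\beta-1}$ is bounded below there.

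Finally I would pass from the gradient bound to local Hölder continuity. Given $z\in\mathbb{D}$ and $w$ with $|z-w|<\tfrac12 d_{\mathbb{D}}(z)=\tfrac12(1-|z|)$, every point $\zeta$ on the segment $[z,w]$ satisfies $1-|\zeta|\ge\tfrac12(1-|z|)$, so integrating $\Lambda_{P[\varphi]}$ along $[z,w]$ gives
\[ |P[\varphi](z)-P[\varphi](w)|\le\int_{[z,w]}\Lambda_{P[\varphi]}(\zeta)\,|d\zeta|\le C\,M\,(1-|z|)^{\beta-1}|z-w|. \]
Since $1-|z|>2|z-w|$ and $\beta-1<0$, we have $(1-|z|)^{\beta-1}\le(2|z-w|)^{\beta-1}$, whence $|P[\varphi](z)-P[\varphi](w)|\le C'\,M\,|z-w|^{\beta}=C'\,M\,\omega_\beta(|z-w|)$. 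This is precisely $P[\varphi]\in\mbox{loc}\mathscr{L}_{\omega_\beta}(\mathbb{D})$, and the extension-domain property then upgrades it to $P[\varphi]\in\mathscr{L}_{\omega_\beta}(\overline{\mathbb{D}})=\Lambda_{\omega_\beta}(\overline{\mathbb{D}})$, as desired.
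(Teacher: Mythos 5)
Your proof is correct, but it is worth noting that the paper itself offers no proof of Theorem A: it is quoted as a classical result (with references to Hardy--Littlewood and Pavlovi\'c), and within the paper's framework it is a special case of Theorem B, since $\omega_\beta(t)=t^\beta$ with $\beta\in(0,1)$ satisfies \eqref{HL-condition} by the one-line computation $\lambda\int_\lambda^\pi t^{\beta-2}\,dt\leq\frac{\lambda^\beta}{1-\beta}=\frac{\omega_\beta(\lambda)}{1-\beta}$; Theorem B then gives $P[\varphi]\in\mathscr{L}_{\omega_\beta}(\mathbb{D})$, and the Dyakonov remark quoted in Section \ref{sec1} (a function in $\mathscr{L}_\omega$ of a proper subdomain extends with the same estimate to the closure) yields the statement on $\overline{\mathbb{D}}$. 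Your argument instead proves the result from scratch, and it is sound: the subtraction of $\varphi(e^{i\theta})$ is legitimate because $P[1]\equiv 1$ kills the constant term, the kernel identity $|\partial_z P|+|\partial_{\overline{z}}P|=2/|1-ze^{-it}|^2$ is exactly what the paper computes in Step 2 of the proof of Theorem \ref{thm-1}, and your gradient-integration step is the same device as in Lemmas \ref{Q-derivative}--\ref{Lipshitz3-lemma}. The genuinely different ingredient is how you estimate the singular integral: you exploit the homogeneity of $t^\beta$ through the scaling $s=(1-r)u$, reducing everything to the convergence of $\int_{\mathbb{R}}|u|^\beta(1+u^2)^{-1}\,du$, whereas the paper's analogous Lemma \ref{J4} splits the circle into a near arc $E_1$ and a far arc $E_2$ and invokes \eqref{HL-condition} on the far part. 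Your scaling trick is cleaner but works only for power-type majorants; the paper's splitting (and Theorem B itself) covers arbitrary majorants satisfying \eqref{HL-condition} and, via Nolder--Oberlin, shows that condition is also necessary. Two small points of hygiene: your bound $\int_{[z,w]}\Lambda_{P[\varphi]}\leq CM(1-|z|)^{\beta-1}|z-w|$ silently absorbs a factor $2^{1-\beta}$ coming from $1-|\zeta|\geq\frac12(1-|z|)$ on the segment, which is fine; and the final passage to $\overline{\mathbb{D}}$ rests on the quoted Dyakonov remark rather than on ``continuity of $\varphi$ up to $\mathbb{T}$'' as you phrase it at the outset, though your closing sentence invokes the right mechanism.
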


On the unit disk $\mathbb{D}$,
Nolder and Oberlin \cite[Lemma 4.2]{No} gave the following necessary and sufficient condition for a majorant $\omega$
to hold the above  Hardy and Littlewood type theorem (cf.\cite[Lemma 4]{Dy1}).
Chen, Hamada and Xie \cite{CHX} generalized the following result to the Euclidean unit ball of $\mathbb{R}^n$,
$n\geq 2$.

\begin{ThmB}\label{H-L-iff}
Let $\omega$ be a majorant.
The following statements are equivalent.
\begin{enumerate}
\item[(1)]
There exists a positive constant $M$ such that
\be\label{HL-condition}
\lambda\int_{\lambda}^{\pi}\frac{\omega(t)}{t^2}dt\leq M\omega(\lambda)
\ee
for all $\lambda\in (0,\pi]$.

\item[(2)]
If $\varphi\in\mathscr{L}_{\omega}(\mathbb{T})$, then
 $P[\varphi]\in \mathscr{L}_{\omega}(\mathbb{D})$.
\end{enumerate}
\end{ThmB}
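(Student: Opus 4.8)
The plan is to prove the two implications separately; the substance lies in $(1)\Rightarrow(2)$, whereas $(2)\Rightarrow(1)$ will follow from a single explicit test function.

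For the sufficiency $(1)\Rightarrow(2)$, write $u:=P[\varphi]$ and, for $z\in\mathbb{D}\setminus\{0\}$, set $\delta:=1-|z|$ and $\zeta:=z/|z|$. I would first establish two pointwise estimates: a boundary estimate
$$|u(z)-\varphi(\zeta)|\le C\,\omega(\delta)$$
and a gradient estimate $\Lambda_u(z)\le C\,\omega(\delta)/\delta$. Both reduce to the same kernel integral. Subtracting the constant $\varphi(\zeta)$ is legitimate because $P[1]\equiv 1$, so the Poisson kernel, as well as its $z$- and $\overline{z}$-derivatives, integrates to $0$ against constants; inserting $|\varphi(e^{it})-\varphi(\zeta)|\le M\omega(|e^{it}-\zeta|)$ and writing $s=t-\arg z$, I bound both $|u(z)-\varphi(\zeta)|$ and $\delta\,\Lambda_u(z)$ by a constant multiple of $\int_0^\pi \delta\,\omega(s)(\delta^2+s^2)^{-1}\,ds$, using the elementary comparisons $1-|z|^2\asymp\delta$, $|1-ze^{-it}|^2\asymp\delta^2+s^2$, $|e^{it}-\zeta|\le|s|$, together with the standard kernel-derivative bound $|\partial_z P|+|\partial_{\overline{z}}P|\le C|1-ze^{-it}|^{-2}$. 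Splitting this integral at $s=\delta$: on $(0,\delta)$ the factor $\delta(\delta^2+s^2)^{-1}$ is at most $\delta^{-1}$, so that piece is at most $\delta^{-1}\int_0^\delta\omega(s)\,ds\le\omega(\delta)$ by monotonicity; on $(\delta,\pi)$ it is at most $\delta s^{-2}$, so that piece is at most $\delta\int_\delta^\pi\omega(s)s^{-2}\,ds$, which is exactly where hypothesis $(1)$ enters and bounds it by $M\omega(\delta)$.

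To deduce $u\in\mathscr{L}_\omega(\mathbb{D})$ from these two estimates, I would split into cases according to $d:=|z_1-z_2|$ and $\rho_j:=1-|z_j|$. If $d\le\tfrac12\max(\rho_1,\rho_2)$, say $\rho_1=\max(\rho_1,\rho_2)$, the segment $[z_1,z_2]$ stays inside $\{w:1-|w|\ge\rho_1/2\ge d\}$, so along it $\Lambda_u\le C\omega(1-|w|)/(1-|w|)\le C\omega(d)/d$ by the gradient estimate and the monotonicity of $\omega(t)/t$; integrating over this length-$d$ segment gives $|u(z_1)-u(z_2)|\le C\omega(d)$. If instead $d>\tfrac12\max(\rho_1,\rho_2)$, then $\rho_1,\rho_2<2d$, and I pass to the boundary through the radial projections $\zeta_j=z_j/|z_j|$:
$$|u(z_1)-u(z_2)|\le|u(z_1)-\varphi(\zeta_1)|+|\varphi(\zeta_1)-\varphi(\zeta_2)|+|\varphi(\zeta_2)-u(z_2)|,$$
where the two outer terms are at most $C\omega(\rho_j)\le C\omega(2d)$ by the boundary estimate, and the middle term is at most $M\omega(|\zeta_1-\zeta_2|)\le M\omega(5d)$ since $|\zeta_1-\zeta_2|\le\rho_1+d+\rho_2<5d$; then $\omega(cd)\le c\,\omega(d)$ closes the estimate. (If some $z_j=0$ then $d\ge\tfrac12$, where the bound is trivial from $\|\varphi\|_\infty$ and $\omega(d)\ge\omega(\tfrac12)$.) This second case is the step I expect to be the main obstacle: condition $(1)$ is a slow-type condition that does not force $\omega$ to be fast, so $\mathbb{D}$ need not be an $\mathscr{L}_\omega$-extension domain and the gradient estimate cannot simply be integrated along a boundary-hugging path; the radial-projection device, combined with the boundary estimate, is precisely what replaces the extension-domain property here.

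For the necessity $(2)\Rightarrow(1)$, I would test $(2)$ on $\varphi(e^{it}):=\omega(|t|)$, $t\in[-\pi,\pi]$. Subadditivity and monotonicity of the majorant give $\varphi\in\mathscr{L}_\omega(\mathbb{T})$, and $\varphi(1)=0$. Since $P[\varphi]$ attains the boundary value $\varphi$, hypothesis $(2)$ gives $P[\varphi](r)=|P[\varphi](r)-\varphi(1)|\le M\omega(1-r)$ for $r\in(0,1)$. On the other hand $\varphi\ge0$, so discarding all of the Poisson integral except $\delta\le|s|\le\pi$ (with $\delta=1-r$) and using $1-r^2\ge\delta$ together with $(1-r)^2+4r\sin^2(s/2)\le2s^2$ on that range yields $P[\varphi](r)\ge c\,\delta\int_\delta^\pi\omega(s)s^{-2}\,ds$. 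Comparing the two bounds gives $\lambda\int_\lambda^\pi\omega(t)t^{-2}\,dt\le M'\omega(\lambda)$ for every $\lambda=\delta\in(0,1)$; the remaining range $\lambda\in[1,\pi]$ is immediate, since there $\int_\lambda^\pi\omega(t)t^{-2}\,dt$ is bounded while $\omega(\lambda)\ge\omega(1)>0$. This is condition $(1)$.
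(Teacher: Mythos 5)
Your proposal is correct, but there is nothing in the paper to compare it against line by line: Theorem B is stated there as a known result, quoted from Nolder--Oberlin \cite{No} (cf.\ Dyakonov \cite{Dy1}), and the paper gives no proof of it. What you have written is essentially a self-contained reconstruction of the classical argument from those references, and it holds up. For $(1)\Rightarrow(2)$ your two kernel estimates are the right ones: the identity $\partial_z P(z,e^{it})=e^{it}/(e^{it}-z)^2$ (which the paper itself uses in Step 2 of the proof of Theorem 1.1) gives the bound $|\partial_z P|+|\partial_{\overline{z}}P|\le 2|1-ze^{-it}|^{-2}$, and the split of $\int_0^\pi \delta\,\omega(s)(\delta^2+s^2)^{-1}ds$ at $s=\delta$ is exactly where monotonicity of $\omega$ handles $(0,\delta)$ and hypothesis $(1)$ handles $(\delta,\pi)$. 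Your case analysis on $d=|z_1-z_2|$ versus $\max(\rho_1,\rho_2)$ is also sound, and you correctly identify the genuine subtlety: condition \eqref{HL-condition} is of slow type, so $\mathbb{D}$ need not be an $\mathscr{L}_{\omega}$-extension domain, and the radial-projection-to-the-boundary device (outer terms controlled by the boundary estimate, middle term by $|\zeta_1-\zeta_2|<5d$ and $\omega(cd)\le c\,\omega(d)$) is what replaces it; this same device reappears in the paper's own Lemmas 2.4, 2.5 and Step 1 of Theorem 1.1 for the operator $Q$. For $(2)\Rightarrow(1)$, the test function $\varphi(e^{it})=\omega(|t|)$ works as you claim: subadditivity of majorants gives $\varphi\in\mathscr{L}_\omega(\mathbb{T})$, the upper bound $P[\varphi](r)\le M\omega(1-r)$ is legitimate because the Poisson extension of a continuous boundary function is continuous up to $\mathbb{T}$ (so the Lipschitz estimate passes to the closure, as the paper notes after \eqref{rrt-1}), and the lower bound $P[\varphi](r)\ge c\,\delta\int_\delta^\pi\omega(s)s^{-2}ds$ follows from positivity and $(1-r)^2+4r\sin^2(s/2)\le 2s^2$ on $s\ge\delta$; the range $\lambda\in[1,\pi]$ is indeed trivial. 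In short: a correct, complete proof of a statement the paper only cites.
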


In \cite{CLW}, the authors investigated the Hardy-Littlewood type theorem for solutions  to inhomogeneous biharmonic equations as follows.
For a function $\varphi$ of $\mathbb{T}$ into $\mathbb{C}$,
let
$\varphi_{1}(e^{it})=\varphi(e^{it})e^{-it}$ for $t\in [0,2\pi]$.

\begin{ThmC}\label{Thm-5} Suppose that $\omega$ is a majorant and
\be\label{remove-1}\limsup_{t\rightarrow0^{+}}\frac{\omega(t)}{t}=c<\infty,\ee and
suppose that $f\in C^{4}(\mathbb{D})$  satisfies the following equations:
$$\begin{cases}\displaystyle \Delta(\Delta f)=g &\mbox{ in }\, \mathbb{D},\\
\displaystyle  f_{\overline{z}}=\varphi &\mbox{ in }\, \mathbb{T},\\
\displaystyle f=0&\mbox{ in }\, \mathbb{T},
\end{cases}$$ where $g\in C(\overline{\mathbb{D}})$ and
$\varphi_{1}\in \mathscr{L}_{\omega}(\mathbb{T})$.
 Then $f\in\mathscr{L}_{\omega}(\mathbb{D})$.
\end{ThmC}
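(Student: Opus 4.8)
The plan is to start from the integral representation (\ref{eq-ch-3}). Since the boundary data here satisfy $\psi\equiv0$, the terms $P[\psi]$ and $Q[\psi]$ vanish and the representation collapses to
\[
f(z)=-(1-|z|^{2})P[\varphi_{1}](z)-\frac{1}{16\pi}\int_{\mathbb{D}}g(w)G(z,w)\,dA(w)=:F_{1}(z)+F_{2}(z).
\]
I would prove $f\in\mathscr{L}_{\omega}(\mathbb{D})$ by showing that each of $F_{1},F_{2}$ has a bounded directional derivative $\Lambda_{F_{j}}$ on $\mathbb{D}$, hence is Lipschitz, and then invoking (\ref{remove-1}) to upgrade ``Lipschitz'' to ``$\in\mathscr{L}_{\omega}$''. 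The point is that, since $\omega(t)/t$ is non-increasing, condition (\ref{remove-1}) forces $\omega(t)\le c\,t$ for all $t>0$, with $\omega(t)/t\to c$ as $t\to0^{+}$. If $c=0$ then $\omega\equiv0$ and there is nothing to prove; if $c>0$ then $\omega(t)$ is comparable to $t$ near $0$, so on the convex domain $\mathbb{D}$ a gradient bound $\Lambda_{F_{j}}\le L$ together with boundedness of $F_{j}$ yields $|F_{j}(z_{1})-F_{j}(z_{2})|\le M\omega(|z_{1}-z_{2}|)$ (handling $|z_{1}-z_{2}|$ small via $\omega(t)\ge(c/2)t$ and $|z_{1}-z_{2}|$ large via $\omega(t)\ge\omega(t_{0})>0$).

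For the biharmonic Green potential $F_{2}$ the plan is to differentiate under the integral sign and bound $\int_{\mathbb{D}}|\nabla_{z}G(z,w)|\,dA(w)$ uniformly in $z$. Since $g\in C(\overline{\mathbb{D}})$ is bounded, boundedness of $\Lambda_{F_{2}}$ follows once $\sup_{z\in\mathbb{D}}\int_{\mathbb{D}}|\nabla_{z}G(z,w)|\,dA(w)<\infty$ is established; the explicit form of $G$ produces only a mild logarithmic singularity of type $|z-w|\log|z-w|$ along the diagonal, which is area-integrable. Thus $F_{2}$ is Lipschitz on $\overline{\mathbb{D}}$, and I note that this step works for every majorant and does not use (\ref{remove-1}).

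The crux is the term $F_{1}$. Writing $\partial_{z}F_{1}=\overline{z}\,P[\varphi_{1}]-(1-|z|^{2})\partial_{z}P[\varphi_{1}]$ (and analogously for $\partial_{\overline{z}}$), the first summand is bounded because $|P[\varphi_{1}]|\le\|\varphi_{1}\|_{\infty}$. The delicate summand is $(1-|z|^{2})\partial_{z}P[\varphi_{1}]$. Using $\int_{0}^{2\pi}\partial_{z}P(z,e^{it})\,dt=0$ to subtract the boundary value at the nearest point $z/|z|$, combined with $\varphi_{1}\in\mathscr{L}_{\omega}(\mathbb{T})$ and the kernel estimate $|\nabla_{z}P(z,e^{it})|\le C|1-ze^{-it}|^{-2}$, I would derive the standard bound
\[
|\nabla_{z}P[\varphi_{1}](z)|\le C\Bigl(\frac{\omega(\delta)}{\delta}+\int_{\delta}^{\pi}\frac{\omega(s)}{s^{2}}\,ds\Bigr),\qquad \delta:=1-|z|.
\]
Multiplying by $1-|z|^{2}\le2\delta$ gives a bound of order $\omega(\delta)+\delta\int_{\delta}^{\pi}\omega(s)s^{-2}\,ds$; here (\ref{remove-1}), in the form $\omega(s)\le cs$, yields $\delta\int_{\delta}^{\pi}\omega(s)s^{-2}\,ds\le c\,\delta\log(\pi/\delta)$, and since $\delta\log(\pi/\delta)$ is bounded on $(0,1]$ the whole expression is bounded on $\mathbb{D}$. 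Hence $\Lambda_{F_{1}}$ is bounded, $F_{1}$ is Lipschitz, and the proof concludes as in the first paragraph.

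I expect the main obstacle to be exactly this last estimate. The Poisson extension of a merely Lipschitz density can have a gradient that blows up logarithmically toward $\mathbb{T}$, so $P[\varphi_{1}]$ itself need not lie in $\mathscr{L}_{\omega}(\mathbb{D})$; indeed the Hardy--Littlewood condition (\ref{HL-condition}) fails for $\omega(t)=t$, so Theorem~B cannot be applied directly to $P[\varphi_{1}]$. The vanishing factor $1-|z|^{2}$ is precisely what absorbs this logarithmic loss, and (\ref{remove-1}) is what guarantees the absorption succeeds. This also explains why removing (\ref{remove-1}), the improvement announced in the abstract, cannot proceed through boundedness of the gradient and must instead estimate the $\mathscr{L}_{\omega}$-modulus of $F_{1}$ directly.
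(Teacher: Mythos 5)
Your argument is correct as a proof of Theorem C, and it follows the same outer scaffolding as the paper's: Theorem C is recovered there as the case $\psi\equiv 0$ of Corollary \ref{cor-2}, which via the representation \eqref{eq-ch-3} reduces to exactly your decomposition $f=-J_{2}-J_{3}$ with $J_{2}(z)=(1-|z|^{2})P[\varphi_{1}](z)$, followed by gradient bounds along segments in the convex domain $\mathbb{D}$. The difference is in how the crucial term $J_{2}$ is treated. You bound $\nabla P[\varphi_{1}]$ by the classical cancellation argument, which needs $\varphi_{1}\in\mathscr{L}_{\omega}(\mathbb{T})$ and then \eqref{remove-1} (in the form $\omega(t)\le ct$) to reduce the resulting growth to $\log(1/\delta)$, which the factor $1-|z|^{2}$ absorbs. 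The paper instead uses the pointwise kernel identities $\partial_{z}P(z,e^{it})=e^{it}/(e^{it}-z)^{2}$ and $\partial_{\overline{z}}P(z,e^{it})=e^{-it}/(e^{-it}-\overline{z})^{2}$, so that
\[
(1-|z|^{2})\bigl(|\partial_{z}P(z,e^{it})|+|\partial_{\overline{z}}P(z,e^{it})|\bigr)=2P(z,e^{it}),
\]
and hence $\Lambda_{J_{2}}(z)\le 4\|\varphi\|_{\infty}$ in Step 2 of the proof of Theorem \ref{thm-1}: only boundedness of $\varphi$ is used, with no modulus of continuity and no condition \eqref{remove-1}. Likewise, the conversion from a Lipschitz bound to an $\mathscr{L}_{\omega}$ bound never needs \eqref{remove-1}: for any majorant, $\omega(t)/t\ge\omega(2)/2$ on $(0,2]$, which suffices on the bounded set $\mathbb{D}$, so your detour through $\omega(t)\ge(c/2)t$ near $0$ (and the $c=0$ case) is superfluous. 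For $J_{3}$ you re-derive by hand what the paper quotes from \cite[Lemma 2.5]{LP}; that is fine. The net comparison: your route proves Theorem C as stated but is intrinsically tied to \eqref{remove-1}, whereas the paper's kernel identity is precisely the mechanism that makes \eqref{remove-1} redundant and shows the hypothesis on $\varphi_{1}$ enters only through $\|\varphi\|_{\infty}$.

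This also refutes your concluding claim that removing \eqref{remove-1} ``cannot proceed through boundedness of the gradient and must instead estimate the $\mathscr{L}_{\omega}$-modulus of $F_{1}$ directly.'' It does proceed through a gradient bound: $\Lambda_{(1-|z|^{2})P[\varphi_{1}]}\le 4\|\varphi\|_{\infty}$ holds unconditionally for bounded $\varphi$. You are right that \eqref{HL-condition} fails for $\omega(t)=t$ and that $P[\varphi_{1}]$ alone need not be Lipschitz, but the cancellation by $1-|z|^{2}$ happens at the level of the kernel, before integration --- the differentiated kernel multiplied by $1-|z|^{2}$ is again the Poisson kernel --- not merely at the level of the integrated logarithmic estimate, and so no information about $\varphi_{1}$ beyond boundedness is ever needed for this term.
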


The main aim of this paper is to establish some Hardy-Littlewood type theorems for solutions  to inhomogeneous biharmonic equations without the additional assumptions ``(\ref{remove-1}) and $f=0$ in $\mathbb{T}$".
The main results are as follows.

\begin{Thm}\label{thm-1}
Let $\omega_{1}$ be a  majorant and $\omega_{2}$ be a majorant which satisfies \eqref{HL-condition}.
Suppose that $g\in C(\overline{\mathbb{D}})$, $\varphi\in\mathscr{L}_{\omega_{1}}(\mathbb{T})$ and $\psi\in\mathscr{L}_{\omega_{2}}(\mathbb{T})$,
and suppose that
$f\in C^{4}(\mathbb{D})$ satisfies the following equation:
\be\label{equat-1}\begin{cases}\displaystyle \Delta(\Delta f)=g &\mbox{ in }\, \mathbb{D},\\
\displaystyle  \partial_{\overline{z}}f=\varphi &\mbox{ in }\, \mathbb{T},\\
\displaystyle f=\psi&\mbox{ in }\, \mathbb{T}.
\end{cases}\ee
Then there is a positive constant $M$ such that, for all $z_{1}, z_{2}\in\mathbb{D}$,
\be\label{Lip-1}|f(z_{1})-f(z_{2})|\leq M\omega_{1}(|z_{1}-z_{2}|)+M\omega_{2}(|z_{1}-z_{2}|).\ee
\end{Thm}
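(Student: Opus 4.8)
The plan is to exploit the explicit representation \eqref{eq-ch-3} and to estimate each of its four summands separately, arranging matters so that the two terms built from $\psi$ contribute the $\omega_2$-modulus while the term built from $\varphi_1$ together with the Green-function term contribute the $\omega_1$-modulus, any purely Lipschitz contribution being absorbed into either majorant. Concretely, write $f=f_1+f_2+f_3+f_4$ with $f_1=P[\psi]$, $f_2=Q[\psi]$, $f_3=-(1-|z|^2)P[\varphi_1]$, and $f_4=-\frac{1}{16\pi}\int_{\mathbb{D}}g(w)G(z,w)\,dA(w)$. Throughout I would use the elementary observation that, since $\omega_j(t)/t$ is non-increasing on the bounded set $\mathbb{D}$, every function that is Lipschitz on $\mathbb{D}$ in the Euclidean sense automatically lies in $\mathscr{L}_{\omega_j}(\mathbb{D})$; this lets me discard all $O(|z_1-z_2|)$ error terms and reduce several pieces to a uniform gradient bound on the convex domain $\mathbb{D}$. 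Note also that $\varphi\in\mathscr{L}_{\omega_1}(\mathbb{T})$ forces $\varphi_1\in\mathscr{L}_{\omega_1}(\mathbb{T})$, since multiplication by the smooth bounded factor $e^{-it}$ preserves the class.

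For the two $\psi$-terms I would argue as follows. Since $\psi\in\mathscr{L}_{\omega_2}(\mathbb{T})$ and $\omega_2$ satisfies \eqref{HL-condition}, Theorem B gives at once $f_1=P[\psi]\in\mathscr{L}_{\omega_2}(\mathbb{D})$. The operator $f_2=Q[\psi]$ is the genuinely new ingredient on the $\psi$-side. Here I would first record the gradient estimate $\Lambda_{Q[\psi]}(z)\le C\,\omega_2(1-|z|)/(1-|z|)$, obtained by differentiating the kernel in \eqref{Q} under the integral sign and using the oscillation of $\psi$, and then convert it into a modulus-of-continuity bound by the same boundary-integral mechanism underlying Theorem B: comparing $Q[\psi](z)$ with its boundary values and integrating the gradient bound against $\omega_2(t)/t^2$, where \eqref{HL-condition} is precisely what forces the resulting integral to converge to a multiple of $\omega_2(|z_1-z_2|)$. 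Alternatively one may algebraically split the kernel of \eqref{Q} into a Poisson part, handled directly by Theorem B, plus a remainder carrying an extra factor $1-|z|^2$ which is then treated by the damping argument below.

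The term $f_3$ is where the hypothesis \eqref{remove-1} is eliminated, and I regard this as the conceptual heart of the matter. The point is the damping factor $1-|z|^2$. For any majorant $\omega_1$ and any $\varphi_1\in\mathscr{L}_{\omega_1}(\mathbb{T})$ one has the standard harmonic gradient bound $|\nabla P[\varphi_1](z)|\le C\,\omega_1(1-|z|)/(1-|z|)$, valid with no further condition on $\omega_1$. Consequently $|\nabla f_3(z)|\le 2|z|\sup_{\mathbb{D}}|P[\varphi_1]|+(1-|z|^2)\,|\nabla P[\varphi_1](z)|\le C+C(1+|z|)\,\omega_1(1-|z|)\le C$, because $\omega_1(1-|z|)\le\omega_1(1)$ is bounded. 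Since $\mathbb{D}$ is convex, integrating along segments yields $|f_3(z_1)-f_3(z_2)|\le C|z_1-z_2|\le C\,\omega_1(|z_1-z_2|)$ for all $z_1,z_2\in\mathbb{D}$. It is exactly the cancellation of the boundary singularity of $\nabla P[\varphi_1]$ by $1-|z|^2$ that renders \eqref{remove-1} superfluous: no integrability of $\omega_1(t)/t$ near $0$ is ever invoked.

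Finally, for $f_4$ I would show directly that $z\mapsto\int_{\mathbb{D}}g(w)G(z,w)\,dA(w)$ is Lipschitz on $\mathbb{D}$: since $g$ is bounded, it suffices to bound $\int_{\mathbb{D}}|\nabla_z G(z,w)|\,dA(w)$ uniformly in $z$, and the only singularity of $\nabla_z G$ is an integrable logarithmic one, softened moreover by the factor $|z-w|^2$ in $G$. This again gives an $O(|z_1-z_2|)$ bound, absorbed into $\omega_1$. Summing the four estimates produces \eqref{Lip-1}. The main obstacle is the treatment of $f_2=Q[\psi]$: unlike $P[\psi]$ it is not literally covered by Theorem B, so the crux is to verify that its kernel obeys both the correct gradient bound and the boundary-integral cancellation governed by \eqref{HL-condition}; once that is established, the remaining three terms are comparatively routine.
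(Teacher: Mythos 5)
Your overall route is the same as the paper's: decompose $f$ via \eqref{eq-ch-3}, apply Theorem B to $P[\psi]$, prove a gradient bound plus boundary decay for $Q[\psi]$, and show that the damped term $(1-|z|^{2})P[\varphi_{1}]$ and the Green potential are Euclidean-Lipschitz, absorbing those bounds into the majorants. Two of your steps, however, do not stand as written.

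First, the ``standard harmonic gradient bound'' you invoke for $f_{3}$, namely $|\nabla P[\varphi_{1}](z)|\leq C\,\omega_{1}(1-|z|)/(1-|z|)$ ``valid with no further condition on $\omega_{1}$'', is false: its validity for all $\varphi_{1}\in\mathscr{L}_{\omega_{1}}(\mathbb{T})$ is essentially the content of \eqref{HL-condition}, which is exactly what is \emph{not} assumed for $\omega_{1}$. Concretely, $\omega_{1}(t)=t$ is a majorant failing \eqref{HL-condition}, and for $\varphi_{1}(e^{it})=|t|$ ($|t|\leq\pi$), which lies in $\mathscr{L}_{\omega_{1}}(\mathbb{T})$, the Fourier expansion gives $\partial_{r}P[\varphi_{1}](r)=-\frac{2}{\pi r}\log\frac{1+r}{1-r}$ along the positive real axis, which is unbounded and hence not $O\bigl(\omega_{1}(1-r)/(1-r)\bigr)=O(1)$. (The oscillation estimate yields only $|\nabla P[\varphi_{1}](z)|\leq C\bigl(\omega_{1}(1-r)/(1-r)+\int_{1-r}^{\pi}\omega_{1}(t)t^{-2}dt\bigr)$, and \eqref{HL-condition} is precisely what controls the tail.) Your conclusion that $f_{3}$ is Lipschitz is nevertheless true, but for a more elementary reason, which is the paper's Step 2: no modulus of continuity of $\varphi_{1}$ is needed at all, since $(1-|z|^{2})|\nabla P[\varphi_{1}](z)|\leq C\|\varphi_{1}\|_{\infty}(1-|z|^{2})\int_{0}^{2\pi}|e^{it}-z|^{-2}dt=2\pi C\|\varphi_{1}\|_{\infty}$, together with the bounded term from differentiating the factor $1-|z|^{2}$. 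So the damping-factor idea is correct, but as stated your argument routes through a false lemma; the repair uses only $\|\varphi\|_{\infty}$.

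Second, for $Q[\psi]$ --- which you rightly call the crux --- your sketch misplaces where \eqref{HL-condition} enters and omits the estimate that actually drives the conclusion. \eqref{HL-condition} is needed already to prove the gradient bound $\Lambda_{Q[\psi]}(z)\leq C\omega_{2}(1-|z|)/(1-|z|)$: differentiating the kernel and using the oscillation of $\psi$ produces the integral $J_{1}(z)=\frac{1}{2\pi}\int_{0}^{2\pi}|\psi(e^{it})-\psi(e^{i\theta})|\,|1-\overline{z}e^{it}|^{-2}dt$, whose tail over $|t-\theta|>1-r$ is $\int_{1-r}^{\pi}\omega_{2}(\rho)\rho^{-2}d\rho$ and is controlled only by \eqref{HL-condition} (the paper's Lemma \ref{J4}). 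Conversely, the passage from that gradient bound to $|Q[\psi](z_{1})-Q[\psi](z_{2})|\leq C\omega_{2}(|z_{1}-z_{2}|)$ does \emph{not} come from ``integrating the gradient bound against $\omega_{2}(t)/t^{2}$''; integrating a bound of the form $\omega_{2}(d)/d$ along paths would instead require the fast condition \eqref{eq2x}, which is also not assumed. What replaces it is the boundary-decay estimate $|Q[\psi](r\xi)|\leq C\omega_{2}(1-r)$ (same oscillation integral; the paper's Lemma \ref{Lipshitz-lemma}), followed by a case analysis: when $|z_{1}-z_{2}|\geq 1-|z_{1}|$ use the decay at both points; when $|z_{1}-z_{2}|<1-|z_{1}|$ integrate the gradient bound and use that $\omega_{2}(t)/t$ is non-increasing; and reduce arbitrary pairs to radial and equal-modulus pairs via $w=\frac{|z_{1}|}{|z_{2}|}z_{2}$ (the paper's Lemmas \ref{Lipshitz2-lemma} and \ref{Lipshitz3-lemma} and Step 1). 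Your proposal names the right ingredients but leaves this decisive step as an assertion, so it is a genuine gap; once it is filled in as above, the remaining pieces (Theorem B for $P[\psi]$, the Lipschitz bound for the Green potential) go through as you describe.
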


The following result easily follows from Theorem \ref{thm-1}.

\begin{Cor}\label{cor-1}
Let $\omega$ be a  majorant.
Suppose that $g\in C(\overline{\mathbb{D}})$, $\varphi\in\mathscr{L}_{\omega}(\mathbb{T})$
and
$f\in C^{4}(\mathbb{D})$ satisfies the following equation:
$$\begin{cases}\displaystyle \Delta(\Delta f)=g &\mbox{ in }\, \mathbb{D},\\
\displaystyle  \partial_{\overline{z}}f=\varphi &\mbox{ in }\, \mathbb{T},\\
\displaystyle f=0&\mbox{ in }\, \mathbb{T}.
\end{cases}$$
Then $f\in\mathscr{L}_{\omega}(\mathbb{D})$.
\end{Cor}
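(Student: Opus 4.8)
The plan is to derive the corollary from Theorem \ref{thm-1} by fixing an auxiliary majorant $\omega_{2}$ for the (now vanishing) Dirichlet datum $\psi$. I would set $\omega_{1}=\omega$ and $\omega_{2}(t)=t^{\beta}$ with $\beta\in(0,1)$ fixed. One checks directly that $\omega_{2}$ is a majorant and satisfies \eqref{HL-condition}:
\be
\lambda\int_{\lambda}^{\pi}\frac{t^{\beta}}{t^{2}}\,dt=\frac{\lambda^{\beta}-\lambda\,\pi^{\beta-1}}{1-\beta}\leq\frac{1}{1-\beta}\,\omega_{2}(\lambda),\qquad\lambda\in(0,\pi],
\ee
so $\omega_{2}$ is admissible in Theorem \ref{thm-1}. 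Since $\psi\equiv0$ belongs to $\mathscr{L}_{\omega_{2}}(\mathbb{T})$ trivially (with zero seminorm) and the remaining hypotheses on $g$ and $\varphi$ are those assumed, Theorem \ref{thm-1} applies and yields \eqref{Lip-1}.

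The decisive point is that, when $\psi\equiv0$, the $\omega_{2}$-term in \eqref{Lip-1} is not merely bounded but absent. In the representation \eqref{eq-ch-3} the sole $\psi$-dependent contribution is $P[\psi]+Q[\psi]$, and this is the piece responsible for the $\omega_{2}$-term in \eqref{Lip-1}, the $\varphi$- and $g$-pieces being controlled by $\omega_{1}=\omega$. With $\psi\equiv0$ we have $P[\psi]+Q[\psi]\equiv0$, so this contribution disappears and \eqref{Lip-1} collapses to $|f(z_{1})-f(z_{2})|\leq M\,\omega(|z_{1}-z_{2}|)$, that is, $f\in\mathscr{L}_{\omega}(\mathbb{D})$.

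I expect the main obstacle to be exactly this last reduction, since one cannot instead finish by absorbing the $\omega_{2}$-term into $\omega$. For example, when $\omega(t)=t$, absorption would require an admissible $\omega_{2}$ with $\omega_{2}(t)\leq C\,t$; but every majorant obeys $\omega_{2}(t)\geq(\omega_{2}(\pi)/\pi)\,t$ on $(0,\pi]$, so such an $\omega_{2}$ would be comparable to $t$, whence \eqref{HL-condition} must fail, as $\lambda\int_{\lambda}^{\pi}t^{-1}\,dt=\lambda\log(\pi/\lambda)$ is not $O(\lambda)$. The argument must therefore genuinely use that $\psi\equiv0$ suppresses the $\omega_{2}$-term. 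A final routine check is that the $g$-integral in \eqref{eq-ch-3} is charged to the $\omega$-estimate: it is Lipschitz on $\mathbb{D}$ (as $g\in C(\overline{\mathbb{D}})$), and on the bounded domain $\mathbb{D}$ every Lipschitz function lies in $\mathscr{L}_{\omega}$ for any majorant, because $\omega(t)/t$ is bounded below on $(0,2]$.
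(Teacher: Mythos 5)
Your proposal is correct and takes essentially the same route as the paper, which derives Corollary \ref{cor-1} directly from Theorem \ref{thm-1} (the paper offers no more detail than ``easily follows''): choose an auxiliary admissible $\omega_{2}$, note $\psi\equiv 0$ lies in $\mathscr{L}_{\omega_{2}}(\mathbb{T})$, and conclude. Your key clarification---that the $\omega_{2}$-term cannot be absorbed into $\omega$ but instead vanishes because $\psi\equiv0$ makes $P[\psi]+Q[\psi]\equiv0$ in \eqref{eq-ch-3} (equivalently, every $\omega_{2}$-contribution in the proof of Theorem \ref{thm-1} carries the factor $\|\psi\|_{\mathscr{L}_{\omega_{2}}(\mathbb{T}),s}$)---is exactly the point the paper leaves implicit, and you handle it correctly.
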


If we replace $\varphi\in\mathscr{L}_{\omega_{1}}(\mathbb{T})$ by $\varphi_{1}\in\mathscr{L}_{\omega_{1}}(\mathbb{T})$ in Theorem \ref{thm-1}, then we get the following result.

\begin{Thm}\label{thm-2}
Let $\omega_{1}$ be a  majorant and $\omega_{2}$ be a majorant which satisfies \eqref{HL-condition}.
Suppose that $g\in C(\overline{\mathbb{D}})$, $\varphi_{1}\in\mathscr{L}_{\omega_{1}}(\mathbb{T})$ and $\psi\in\mathscr{L}_{\omega_{2}}(\mathbb{T})$,
and suppose that
$f\in C^{4}(\mathbb{D})$ satisfies the following equation:
$$
\begin{cases}\displaystyle \Delta(\Delta f)=g &\mbox{ in }\, \mathbb{D},\\
\displaystyle  \partial_{\overline{z}}f=\varphi &\mbox{ in }\, \mathbb{T},\\
\displaystyle f=\psi&\mbox{ in }\, \mathbb{T}.
\end{cases}
$$
Then there is a positive constant $M$ such that, for all $z_{1}, z_{2}\in\mathbb{D}$,
$$
|f(z_{1})-f(z_{2})|\leq M\omega_{1}(|z_{1}-z_{2}|)+M\omega_{2}(|z_{1}-z_{2}|).
$$
\end{Thm}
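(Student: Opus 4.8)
The plan is to deduce Theorem \ref{thm-2} from Theorem \ref{thm-1} by showing that, under the hypotheses of Theorem \ref{thm-2}, the function $\varphi$ itself belongs to $\mathscr{L}_{\omega_{1}}(\mathbb{T})$. Once this is done, all the hypotheses of Theorem \ref{thm-1} are satisfied (the conditions on $\psi$ and $g$ are identical in the two statements), and the desired estimate for $|f(z_{1})-f(z_{2})|$ follows at once.

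First I would record the elementary fact that every bounded Lipschitz function on $\mathbb{T}$ lies in $\mathscr{L}_{\omega_{1}}(\mathbb{T})$ for an arbitrary (nontrivial) majorant $\omega_{1}$. Indeed, since $\omega_{1}(t)/t$ is non-increasing, for $t\in(0,2]$ we have $t\leq (2/\omega_{1}(2))\,\omega_{1}(t)$, so any Euclidean-Lipschitz bound on the compact set $\mathbb{T}$ (whose diameter is $2$) is dominated by a constant multiple of $\omega_{1}(|\zeta_{1}-\zeta_{2}|)$. In particular the map $e^{it}\mapsto e^{it}$ is bounded and Lipschitz on $\mathbb{T}$, hence lies in $\mathscr{L}_{\omega_{1}}(\mathbb{T})$.

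Next I would invoke the product estimate for the space $\mathscr{L}_{\omega_{1}}(\mathbb{T})$: if $u,v$ are bounded and belong to $\mathscr{L}_{\omega_{1}}(\mathbb{T})$, then $uv\in\mathscr{L}_{\omega_{1}}(\mathbb{T})$, because
\[
|u(\zeta_{1})v(\zeta_{1})-u(\zeta_{2})v(\zeta_{2})|\leq \|u\|_{\infty}\,|v(\zeta_{1})-v(\zeta_{2})|+\|v\|_{\infty}\,|u(\zeta_{1})-u(\zeta_{2})|.
\]
Applying this with $u=\varphi_{1}$ (which is bounded, being continuous on the compact set $\mathbb{T}$, and lies in $\mathscr{L}_{\omega_{1}}(\mathbb{T})$ by hypothesis) and $v(e^{it})=e^{it}$, and recalling that $\varphi(e^{it})=\varphi_{1}(e^{it})e^{it}$, I conclude that $\varphi\in\mathscr{L}_{\omega_{1}}(\mathbb{T})$.

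With $\varphi\in\mathscr{L}_{\omega_{1}}(\mathbb{T})$, $\psi\in\mathscr{L}_{\omega_{2}}(\mathbb{T})$ and $g\in C(\overline{\mathbb{D}})$ in hand, Theorem \ref{thm-1} applies and yields the stated bound. I expect no serious obstacle here: the only point requiring (minor) care is the equivalence $\varphi_{1}\in\mathscr{L}_{\omega_{1}}(\mathbb{T})\Leftrightarrow\varphi\in\mathscr{L}_{\omega_{1}}(\mathbb{T})$, which rests on the fact that multiplication by the smooth unimodular factor $e^{\pm it}$ preserves $\mathscr{L}_{\omega_{1}}(\mathbb{T})$. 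As an alternative to this reduction, one could prove Theorem \ref{thm-2} directly by estimating the four terms in the representation \eqref{eq-ch-3}; in that approach the hypothesis $\varphi_{1}\in\mathscr{L}_{\omega_{1}}(\mathbb{T})$ is applied most naturally to the term $(1-|z|^{2})P[\varphi_{1}](z)$, while the terms $P[\psi]$, $Q[\psi]$, and the biharmonic Green integral are handled exactly as in Theorem \ref{thm-1}.
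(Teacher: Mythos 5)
Your proposal is correct and takes essentially the same route as the paper: the paper deduces Theorem \ref{thm-2} from Theorem \ref{thm-1} together with Lemma \ref{varphi-Lipschitz}, which asserts exactly the equivalence $\varphi_{1}\in\mathscr{L}_{\omega}(\mathbb{T})\Leftrightarrow\varphi\in\mathscr{L}_{\omega}(\mathbb{T})$ and proves it by the same unimodular-factor product estimate, using the bound $t\leq \bigl(2/\omega(2)\bigr)\omega(t)$ for $t\in(0,2]$ that you invoke. Your write-up merely makes explicit (via the general product rule and the Lipschitz-implies-$\mathscr{L}_{\omega}$ observation) the one-line computation the paper carries out directly.
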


In particular,
if we take $\psi\equiv0$ in Theorem \ref{thm-2}, then we get the following result.

\begin{Cor}\label{cor-2}
Let $\omega$ be a  majorant.
Suppose that $g\in C(\overline{\mathbb{D}})$, $\varphi_{1}\in\mathscr{L}_{\omega}(\mathbb{T})$
and
$f\in C^{4}(\mathbb{D})$ satisfies the following equation:
$$\begin{cases}\displaystyle \Delta(\Delta f)=g &\mbox{ in }\, \mathbb{D},\\
\displaystyle  \partial_{\overline{z}}f=\varphi &\mbox{ in }\, \mathbb{T},\\
\displaystyle f=0&\mbox{ in }\, \mathbb{T}.
\end{cases}$$
Then $f\in\mathscr{L}_{\omega}(\mathbb{D})$.
\end{Cor}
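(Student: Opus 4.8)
The plan is to read off Corollary \ref{cor-2} from Theorem \ref{thm-2} by specializing to $\psi \equiv 0$. First I would fix an auxiliary majorant $\omega_{2}$ satisfying the Hardy--Littlewood condition \eqref{HL-condition}; the simplest choice is $\omega_{2}(t)=t^{1/2}$, which is immediately checked to be a majorant obeying \eqref{HL-condition}. Since the zero function trivially belongs to $\mathscr{L}_{\omega_{2}}(\mathbb{T})$, all hypotheses of Theorem \ref{thm-2} hold with $\omega_{1}=\omega$ and $\psi\equiv 0$, and the theorem supplies a constant $M>0$ with
$$|f(z_{1})-f(z_{2})|\le M\omega(|z_{1}-z_{2}|)+M\omega_{2}(|z_{1}-z_{2}|),\qquad z_{1},z_{2}\in\mathbb{D}.$$
The only thing left to do is to discard the $\omega_{2}$ term.

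The mechanism for this is already visible in the representation \eqref{eq-ch-3}: putting $\psi\equiv 0$ annihilates both $P[\psi]$ and $Q[\psi]$, leaving
$$f(z)=-(1-|z|^{2})\,P[\varphi_{1}](z)-\frac{1}{16\pi}\int_{\mathbb{D}}g(w)\,G(z,w)\,dA(w).$$
Since $P[\psi]+Q[\psi]$ is the only part of \eqref{eq-ch-3} governed by $\psi$, the term $M\omega_{2}(|z_{1}-z_{2}|)$ in Theorem \ref{thm-2} can only arise from estimating it --- this is exactly the step where the Hardy--Littlewood characterization of Theorem B and the hypothesis that $\omega_{2}$ satisfies \eqref{HL-condition} enter. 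With $\psi\equiv 0$ that contribution vanishes identically, and the estimate collapses to $|f(z_{1})-f(z_{2})|\le M\omega(|z_{1}-z_{2}|)$, that is, $f\in\mathscr{L}_{\omega}(\mathbb{D})$.

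The one point that needs care, and which I regard as the only (minor) obstacle, is the bookkeeping: I must ensure that the $\omega_{2}$-part of the bound in Theorem \ref{thm-2} is attributable to $\psi$ alone, with none of the $\varphi_{1}$ or $g$ contributions hidden inside it. The linearity of \eqref{eq-ch-3} makes this transparent. The term $(1-|z|^{2})P[\varphi_{1}]$ is estimated against $\omega=\omega_{1}$ using $\varphi_{1}\in\mathscr{L}_{\omega}(\mathbb{T})$, while the Green-function integral is Lipschitz on $\mathbb{D}$ (a standard bound, since $g$ is bounded on $\overline{\mathbb{D}}$ and $\partial_{z}G(z,w)$ is integrable in $w$ uniformly in $z$) and hence belongs to $\mathscr{L}_{\omega}(\mathbb{D})$ for \emph{every} majorant $\omega$, because the monotonicity of $\omega(t)/t$ gives $t\le C_{\omega}\,\omega(t)$ on the bounded range $t\in(0,2]$. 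Thus neither of these two pieces requires $\omega_{2}$, confirming that setting $\psi\equiv 0$ removes the $\omega_{2}$ term in full and leaves exactly the assertion $f\in\mathscr{L}_{\omega}(\mathbb{D})$.
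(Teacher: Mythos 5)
Your proposal is correct and follows essentially the same route as the paper, which proves Corollary \ref{cor-2} simply by taking $\psi\equiv 0$ in Theorem \ref{thm-2}. Your extra bookkeeping (choosing $\omega_{2}(t)=t^{1/2}$ and tracing through the representation \eqref{eq-ch-3} to confirm that the $\omega_{2}$-term comes only from the $P[\psi]+Q[\psi]$ part, which vanishes) is precisely the justification the paper leaves implicit, and it is sound.
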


The proofs of Theorems \ref{thm-1} and \ref{thm-2}
will be presented in Sec. \ref{sec2}.

\section{The proofs of the main results}\label{sec2}
In this section, we first present some lemmas that will play an important role in proving Theorem \ref{thm-1}.

\begin{Lem}
\label{J4}
Let $\omega_{2}$ be a majorant which satisfies \eqref{HL-condition}
and $\psi\in\mathscr{L}_{\omega_{2}}(\mathbb{T})$.
For $z=re^{i\theta}\in \mathbb{D}$,
let
\be\label{J4-def}
J_1(re^{i\theta})=\frac{1}{2\pi}\int_{0}^{2\pi}\frac{|\psi(e^{it})-\psi(e^{i\theta})|}{|1-\overline{z}e^{it}|^{2}}dt.
\ee
Then there exists a positive constant $M$ such that
\begin{align*}
J_1(re^{i\theta})\leq M\|\psi\|_{\mathscr{L}_{\omega_{2}}(\mathbb{T}),s}\frac{\omega_{2}(1-r)}{1-r},
\quad
|z|=r<1,
\quad
\theta\in [0, 2\pi].
\end{align*}
\end{Lem}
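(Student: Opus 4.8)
The plan is to estimate the integral directly, feeding the Lipschitz hypothesis on $\psi$ into the numerator and the Hardy--Littlewood type condition \eqref{HL-condition} into the denominator. Writing $z=re^{i\theta}$, the first step is the substitution $s=t-\theta$ together with the elementary identity
$$|1-\overline{z}e^{it}|^{2}=|1-re^{is}|^{2}=(1-r)^{2}+4r\sin^{2}(s/2),$$
which, using $2\pi$-periodicity, recasts $J_1$ as an integral over $[-\pi,\pi]$. Next I bound the numerator by the seminorm: since $|e^{i(\theta+s)}-e^{i\theta}|=2\sin(|s|/2)\le |s|$ for $|s|\le\pi$ and $\omega_{2}$ is increasing, we get $|\psi(e^{i(\theta+s)})-\psi(e^{i\theta})|\le \|\psi\|_{\mathscr{L}_{\omega_{2}}(\mathbb{T}),s}\,\omega_{2}(|s|)$. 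The resulting integrand is even in $s$, so
$$
J_1(re^{i\theta})\leq \frac{\|\psi\|_{\mathscr{L}_{\omega_{2}}(\mathbb{T}),s}}{\pi}\int_{0}^{\pi}\frac{\omega_{2}(s)}{(1-r)^{2}+4r\sin^{2}(s/2)}\,ds .
$$
Everything then reduces to bounding this last integral by a constant multiple of $\omega_{2}(1-r)/(1-r)$.

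I would first dispose of the range $r\le 1/2$, where the estimate is essentially trivial: the denominator is at least $(1-r)^{2}\ge 1/4$ and $\omega_{2}(s)\le\omega_{2}(\pi)$, so the integral is bounded by a constant, while $\omega_{2}(1-r)/(1-r)\ge\omega_{2}(1)$ because $t\mapsto\omega_{2}(t)/t$ is non-increasing and $1-r\le 1$. (The degenerate case $\omega_{2}\equiv0$ forces $\psi$ constant and $J_1\equiv0$.) Thus the asserted inequality holds on $r\le 1/2$ with a constant depending only on $\omega_{2}$.

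For the main range $r\ge 1/2$, set $a=1-r\in(0,1/2]$ and split the integral at $s=a$. On $[0,a]$ I discard the $\sin^{2}$ term, bound the denominator below by $a^{2}$, and use $\omega_{2}(s)\le\omega_{2}(a)$, giving a contribution at most $\omega_{2}(a)/a$. On $[a,\pi]$ I invoke Jordan's inequality $\sin(s/2)\ge s/\pi$, so that
$$
(1-r)^{2}+4r\sin^{2}(s/2)\ge 4r\,\frac{s^{2}}{\pi^{2}}\ge \frac{2s^{2}}{\pi^{2}},
$$
whence the contribution of $[a,\pi]$ is at most $\tfrac{\pi^{2}}{2}\int_{a}^{\pi}\omega_{2}(s)s^{-2}\,ds$. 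The crucial step is now to apply \eqref{HL-condition} with $\lambda=a$, which bounds this tail integral by $M\,\omega_{2}(a)/a$. Adding the two pieces and restoring the seminorm factor yields
$$
J_1(re^{i\theta})\leq M'\,\|\psi\|_{\mathscr{L}_{\omega_{2}}(\mathbb{T}),s}\,\frac{\omega_{2}(1-r)}{1-r},
$$
uniformly in $\theta\in[0,2\pi]$ and $r<1$, as claimed.

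The only genuine difficulty is this final tail estimate. The near-diagonal piece $s\in[0,a]$ is controlled purely by monotonicity of $\omega_{2}$, but the ``tail'' $s\in[a,\pi]$ would diverge (relative to $\omega_2(a)/a$) for a general majorant; it is precisely the Hardy--Littlewood type condition \eqref{HL-condition} that tames it and produces the exact factor $\omega_{2}(1-r)/(1-r)$. Throughout I would keep the constants absolute or dependent only on $\omega_{2}$ (through $M$), since the statement requires the bound to be uniform in $\theta$ and $r$.
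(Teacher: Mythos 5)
Your proof is correct and follows essentially the same route as the paper's: bound the numerator by $\|\psi\|_{\mathscr{L}_{\omega_{2}}(\mathbb{T}),s}\,\omega_{2}(|t-\theta|)$, split the circle at $|t-\theta|=1-r$ (for $r\geq 1/2$), control the near piece by monotonicity of $\omega_{2}$ and the far piece via Jordan's inequality together with \eqref{HL-condition} applied at $\lambda=1-r$, and dispose of $r\leq 1/2$ by a crude bound. The only cosmetic differences are your explicit substitution $s=t-\theta$ with the evenness reduction and your handling of the $r\leq 1/2$ case via $\omega_{2}(1-r)/(1-r)\geq\omega_{2}(1)$ (the paper instead uses the majorant inequality $\omega_{2}(2)\leq 2\omega_{2}(1-r)/(1-r)$, which avoids the degenerate-case discussion); neither affects the substance.
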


\begin{proof}
\noindent $\mathbf{Case~1.}$ We first assume $r\geq1/2$.\\
Let $E_{1}(\theta)=\{t\in[-\pi+\theta, \pi+\theta]:~|t-\theta|\leq1-r\}$ and $E_{2}(\theta)=[-\pi+\theta, \pi+\theta]\backslash E_{1}(\theta)$.
Then
\beqq
J_1(z)&=&\frac{1}{2\pi}\int_{E_{1}\cup E_{2}}\frac{|\psi(e^{it})-\psi(e^{i\theta})|}{|1-\overline{z}e^{it}|^{2}}dt\\
&\leq&\frac{\|\psi\|_{\mathscr{L}_{\omega_{2}}(\mathbb{T}),s}}{2\pi}\int_{E_{1}\cup E_{2}}\frac{\omega_{2}(|e^{i\theta}-e^{it}|)}{|1-\overline{z}e^{it}|^{2}}dt\\
&=&\frac{\|\psi\|_{\mathscr{L}_{\omega_{2}}(\mathbb{T}),s}}{2\pi}\int_{E_{1}\cup E_{2}}\frac{\omega_{2}\left(2\left|\sin\frac{\theta-t}{2}\right|\right)}{(1-r)^{2}+4r\left(\sin\frac{\theta-t}{2}\right)^{2}}dt,
\eeqq
which, together with $\vartheta \geq \sin \vartheta\geq2\vartheta/\pi$ for $0\leq \vartheta\leq\pi/2$, implies that
\beq\label{eq-1.41}
J_1(z)&\leq&\frac{\|\psi\|_{\mathscr{L}_{\omega_{2}}(\mathbb{T}),s}}{2\pi}\int_{E_{1}\cup E_{2}}\frac{\omega_{2}(|\theta-t|)}{(1-r)^{2}+\frac{4}{\pi^{2}}r\left(\theta-t\right)^{2}}dt\\ \nonumber
&\leq&\frac{\|\psi\|_{\mathscr{L}_{\omega_{2}}(\mathbb{T}),s}}{2\pi}\left(
\int_{E_{1}}\frac{\omega_{2}(1-r)}{(1-r)^{2}}dt+
\frac{\pi^2}{2}\int_{E_{2}}\frac{\omega_{2}(|\theta-t|)}{\left(\theta-t\right)^{2}}dt\right)\\ \nonumber
&\leq&\frac{\|\psi\|_{\mathscr{L}_{\omega_{2}}(\mathbb{T}),s}}{\pi}\left(\frac{\omega_{2}(1-r)}{1-r}+\frac{\pi^2}{2}\int_{1-r}^{\pi}\frac{\omega_{2}(\rho)}{\rho^{2}}d\rho\right).
\eeq
By the assumption \eqref{HL-condition}, we see that there is a positive constant $M$ such that $$\int_{1-r}^{\pi}\frac{\omega_{2}(\rho)}{\rho^{2}}d\rho\leq M\frac{\omega_{2}(1-r)}{1-r},$$
which, together with (\ref{eq-1.41}), yields that
\be\label{eq-9}J_1(z)\leq\|\psi\|_{\mathscr{L}_{\omega_{2}}(\mathbb{T}),s}\left(\frac{1}{\pi}+\frac{\pi}{2}M\right)\frac{\omega_{2}(1-r)}{1-r}.\ee

\noindent $\mathbf{Case~2.}$ We  assume $r\leq1/2$.\\
\beq\label{eq-10}
J_1(z)&\leq&\frac{\|\psi\|_{\mathscr{L}_{\omega_{2}}(\mathbb{T}),s}}{2\pi}\int_{0}^{2\pi}\frac{\omega_{2}(|e^{i\theta}-e^{it}|)}{|1-\overline{z}e^{it}|^{2}}dt\\ \nonumber
&\leq&
\|\psi\|_{\mathscr{L}_{\omega_{2}}(\mathbb{T}),s}\frac{\omega_{2}(2)}{(1-r)^{2}}\\ \nonumber
&\leq&8\|\psi\|_{\mathscr{L}_{\omega_{2}}(\mathbb{T}),s}\frac{\omega_{2}(1-r)}{1-r}.
\eeq
(\ref{eq-9}) and (\ref{eq-10}) give the desired result.
This completes the proof.
\end{proof}
\begin{Lem}
\label{Lipshitz-lemma}
Let $\omega_{2}$ be a majorant which satisfies \eqref{HL-condition}
and $\psi\in\mathscr{L}_{\omega_{2}}(\mathbb{T})$.
Let $Q[\psi]$  be as in \eqref{Q}.
Then
there is a positive constant $M$ such that
\be
\label{eq-Lipschitz}
|Q[\psi](r\xi)|
\leq
 M\|\psi\|_{\mathscr{L}_{\omega_{2}}(\mathbb{T}),s}\omega_2(1-r)
\ee
for all $\xi \in \mathbb{T}$ and $r\in [0,1)$.
\end{Lem}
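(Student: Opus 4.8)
The plan is to exploit a cancellation coming from the fact that the kernel appearing in $Q[\psi]$ integrates to zero, and then to reduce the estimate directly to Lemma \ref{J4}. Write $z=r\xi=re^{i\theta}$ with $\xi\in\mathbb{T}$ and $r\in[0,1)$. The key first step is to establish the identity
\[
\frac{1}{2\pi}\int_{0}^{2\pi}\overline{z}e^{it}\frac{1-|z|^{2}}{(1-\overline{z}e^{it})^{2}}\,dt=0.
\]
To see this, substitute $w=e^{it}$, so that $dt=dw/(iw)$ and the factor $e^{it}=w$ cancels the $w$ produced by $dt$; the integral becomes, up to the constant $(1-|z|^{2})/i$, the contour integral $\oint_{|w|=1}\overline{z}(1-\overline{z}w)^{-2}\,dw$. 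Since $\overline{z}(1-\overline{z}w)^{-2}=\frac{d}{dw}(1-\overline{z}w)^{-1}$ is the derivative of a function holomorphic on $\overline{\mathbb{D}}$ (its only pole $w=1/\overline{z}$ satisfies $|1/\overline{z}|=1/r>1$ when $r\neq0$), the integral vanishes by Cauchy's theorem.

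With this cancellation in hand, I would subtract the constant $\psi(e^{i\theta})$ under the integral sign, which costs nothing by the identity just proved, to rewrite
\[
Q[\psi](z)=\frac{1}{2\pi}\int_{0}^{2\pi}\overline{z}e^{it}\bigl(\psi(e^{it})-\psi(e^{i\theta})\bigr)\frac{1-|z|^{2}}{(1-\overline{z}e^{it})^{2}}\,dt.
\]
Taking absolute values and using $|\overline{z}e^{it}|=r\leq1$ together with $1-|z|^{2}=(1-r)(1+r)\leq2(1-r)$, I obtain
\[
|Q[\psi](z)|\leq 2(1-r)\cdot\frac{1}{2\pi}\int_{0}^{2\pi}\frac{|\psi(e^{it})-\psi(e^{i\theta})|}{|1-\overline{z}e^{it}|^{2}}\,dt=2(1-r)\,J_1(re^{i\theta}).
\]

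Finally, I would invoke Lemma \ref{J4} to bound $J_1(re^{i\theta})\leq M\|\psi\|_{\mathscr{L}_{\omega_{2}}(\mathbb{T}),s}\,\omega_{2}(1-r)/(1-r)$; the factor $1-r$ then cancels against the $2(1-r)$ above, yielding $|Q[\psi](z)|\leq 2M\|\psi\|_{\mathscr{L}_{\omega_{2}}(\mathbb{T}),s}\,\omega_{2}(1-r)$, which is the asserted estimate with constant $2M$. The case $z=0$ is trivial since $Q[\psi](0)=0$. The only genuinely nontrivial ingredient is the vanishing of the kernel integral, which subtracts off the ``constant part'' of $\psi$ and converts $Q[\psi]$ into an expression governed by the modulus of continuity of $\psi$; everything else is a routine application of the triangle inequality and Lemma \ref{J4}. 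I therefore expect that contour-integration identity to be the crux of the argument.
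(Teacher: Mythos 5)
Your proof is correct and takes essentially the same approach as the paper: the same cancellation identity for the kernel, the same subtraction of $\psi(e^{i\theta})$ under the integral, and the same reduction to Lemma \ref{J4}, with the factor $1-|z|^{2}$ cancelling the $1/(1-r)$ in that lemma's bound. The only difference is that you verify the cancellation identity via Cauchy's theorem, whereas the paper simply asserts it.
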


\begin{proof}
Let $z=re^{i\theta}\in\mathbb{D}$, where $r\in[0,1)$ and $\theta\in[0,2\pi]$.
Since $$\frac{1}{2\pi}\int_{0}^{2\pi}\overline{z}e^{it}\psi(e^{i\theta})\frac{1-|z|^{2}}{(1-\overline{z}e^{it})^{2}}dt=0,$$
we have
\begin{align*}
|Q[\psi](z)|&=\left|\frac{1}{2\pi}\int_{0}^{2\pi}\overline{z}e^{it}(\psi(e^{it})-\psi(e^{i\theta}))\frac{1-|z|^{2}}{(1-\overline{z}e^{it})^{2}}dt\right|
\\
&\leq
(1-|z|^2)J_1(z),
\end{align*}
which combined with Lemma \ref{J4} implies
\eqref{eq-Lipschitz}, as desired.
This completes the proof.
\end{proof}

\begin{Lem}
\label{Q-derivative}
Let $\omega_{2}$ be a majorant which satisfies \eqref{HL-condition}
and $\psi\in\mathscr{L}_{\omega_{2}}(\mathbb{T})$.
Let $Q[\psi]$  be as in \eqref{Q}.
Then
there is a positive constant $M$ such that
\[
\Lambda_{Q[\psi]}(re^{i\theta})\leq M\|\psi\|_{\mathscr{L}_{\omega_{2}}(\mathbb{T}),s}\frac{\omega_{2}(1-r)}{1-r},
\quad
|z|=r<1,
\quad
\theta\in [0, 2\pi].
\]
\end{Lem}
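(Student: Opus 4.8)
The plan is to estimate $\Lambda_{Q[\psi]}$ by differentiating the integral representation \eqref{Q} under the integral sign and reducing the two resulting bounds to the quantity $J_1$ already controlled in Lemma \ref{J4}. Writing the kernel as $K(z,t)=\overline{z}e^{it}(1-|z|^{2})(1-\overline{z}e^{it})^{-2}$ and treating $z$ and $\overline{z}$ as independent variables, I would first record the pointwise Wirtinger derivatives of $K$. A short computation gives $\partial_{z}K=-\overline{z}^{2}e^{it}(1-\overline{z}e^{it})^{-2}$, while $\partial_{\overline{z}}K$ splits into a term carrying $(1-\overline{z}e^{it})^{-2}$ and a more singular term carrying $(1-\overline{z}e^{it})^{-3}$, namely $\partial_{\overline{z}}K=e^{it}(1-2|z|^{2})(1-\overline{z}e^{it})^{-2}+2\overline{z}e^{2it}(1-|z|^{2})(1-\overline{z}e^{it})^{-3}$.

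The key device is the vanishing identity already exploited in Lemma \ref{Lipshitz-lemma}: since $\frac{1}{2\pi}\int_{0}^{2\pi}K(z,t)\,dt=0$ for every $z$, the function $Q[\mathbf{1}]$ is identically zero, whence $\frac{1}{2\pi}\int_{0}^{2\pi}\partial_{z}K\,dt=\frac{1}{2\pi}\int_{0}^{2\pi}\partial_{\overline{z}}K\,dt=0$. Because $\psi(e^{it})$ does not depend on $z$, differentiation under the integral yields $\partial_{z}Q[\psi](z)=\frac{1}{2\pi}\int_{0}^{2\pi}\partial_{z}K\,\psi(e^{it})\,dt$, and subtracting the constant $\psi(e^{i\theta})$ (legitimate for fixed $z=re^{i\theta}$) gives $\partial_{z}Q[\psi](z)=\frac{1}{2\pi}\int_{0}^{2\pi}\partial_{z}K\,(\psi(e^{it})-\psi(e^{i\theta}))\,dt$, and likewise for $\partial_{\overline{z}}Q[\psi]$. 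This is precisely the subtraction trick that produces the difference quotient appearing in the definition \eqref{J4-def} of $J_1$.

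The remaining work is to dominate each derivative kernel by a constant multiple of $|1-\overline{z}e^{it}|^{-2}$, so that both partial derivatives are bounded by a fixed multiple of $J_1(z)$. For $\partial_{z}K$ this is immediate, since $|\partial_{z}K|=r^{2}|1-\overline{z}e^{it}|^{-2}\leq|1-\overline{z}e^{it}|^{-2}$. For $\partial_{\overline{z}}K$ the first term is handled by $|1-2r^{2}|\leq 1$, and the main obstacle is the $(1-\overline{z}e^{it})^{-3}$ term. I expect to tame it by absorbing one power of the denominator into the factor $1-|z|^{2}$, using the elementary inequalities $|1-\overline{z}e^{it}|\geq 1-r$ and $1-r^{2}\leq 2(1-r)$, which give $\frac{1-|z|^{2}}{|1-\overline{z}e^{it}|}\leq 2$; together with $r\leq 1$ this reduces the singular term to a bounded multiple of $|1-\overline{z}e^{it}|^{-2}$ as well.

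Combining these estimates, I obtain $\Lambda_{Q[\psi]}(z)=|\partial_{z}Q[\psi](z)|+|\partial_{\overline{z}}Q[\psi](z)|\leq C\,J_{1}(z)$ for an absolute constant $C$, and Lemma \ref{J4} then furnishes the desired bound $\Lambda_{Q[\psi]}(re^{i\theta})\leq M\|\psi\|_{\mathscr{L}_{\omega_{2}}(\mathbb{T}),s}\,\omega_{2}(1-r)/(1-r)$. The only genuinely delicate point is justifying differentiation under the integral and the harmless-looking but essential control of the cubic singularity by the boundary factor $1-|z|^{2}$; everything else is bookkeeping that feeds directly into Lemma \ref{J4}.
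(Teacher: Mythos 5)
Your proposal is correct and follows essentially the same route as the paper: differentiate under the integral, use the vanishing of $\frac{1}{2\pi}\int_0^{2\pi}K(z,t)\,dt$ to insert the difference $\psi(e^{it})-\psi(e^{i\theta})$, bound both derivative kernels by a constant multiple of $|1-\overline{z}e^{it}|^{-2}$ (absorbing the cubic singularity into the factor $1-|z|^{2}$), and invoke Lemma \ref{J4}. Your expression for $\partial_{\overline{z}}K$ is just an algebraic regrouping of the paper's, and the resulting constants agree.
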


\begin{proof}
Let $z=re^{i\theta}\in\mathbb{D}$, where $r\in[0,1)$ and $\theta\in[0,2\pi]$.
Since $$\frac{1}{2\pi}\int_{0}^{2\pi}\psi(e^{i\theta})\frac{\overline{z}e^{it}}{(1-\overline{z}e^{it})^{2}}dt=0,$$
we see that
\beqq
\partial_{z}Q[\psi](z)=-\frac{1}{2\pi}\int_{0}^{2\pi}(\psi(e^{it})-\psi(e^{i\theta}))\frac{\overline{z}^{2}e^{it}}{(1-\overline{z}e^{it})^{2}}dt
\eeqq
and
\beqq
\partial_{\overline{z}}Q[\psi](z)&=&-\frac{1}{2\pi}\int_{0}^{2\pi}(\psi(e^{it})-\psi(e^{i\theta}))\frac{|z|^{2}e^{it}}{(1-\overline{z}e^{it})^{2}}dt\\
&&+\frac{1-|z|^{2}}{2\pi}\int_{0}^{2\pi}(\psi(e^{it})-\psi(e^{i\theta}))\frac{e^{it}(1+\overline{z}e^{it})}{(1-\overline{z}e^{it})^{3}}dt.
\eeqq
Then
\be\label{eq-5}
|\partial_{z}Q[\psi](z)|\leq\frac{|z|^{2}}{2\pi}\int_{0}^{2\pi}\frac{|\psi(e^{it})-\psi(e^{i\theta})|}{|1-\overline{z}e^{it}|^{2}}dt\leq
J_1(z)
\ee
and
\be\label{eq-6}
|\partial_{\overline{z}}Q[\psi](z)|\leq\frac{|z|^{2}+(1+|z|)^{2}}{2\pi}\int_{0}^{2\pi}\frac{|\psi(e^{it})-\psi(e^{i\theta})|}{|1-\overline{z}e^{it}|^{2}}dt
\leq5J_1(z),
\ee where $J_1$ is as in \eqref{J4-def}.
Combining \eqref{eq-5}, \eqref{eq-6} and Lemma \ref{J4},
we obtain that
\[
\Lambda_{Q[\psi]}(re^{i\theta})\leq M\|\psi\|_{\mathscr{L}_{\omega_{2}}(\mathbb{T}),s}\frac{\omega_{2}(1-r)}{1-r},
\quad
|z|=r<1,
\quad
\theta\in [0, 2\pi].
\]
This completes the proof.
\end{proof}

For $z, w\in \mathbb{C}$,
let $[z, w]$ denote the segment from $z$ to $w$.

\begin{Lem}
\label{Lipshitz2-lemma}
Let $\omega_{2}$ be a majorant which satisfies \eqref{HL-condition}
and $\psi\in\mathscr{L}_{\omega_{2}}(\mathbb{T})$.
Let $Q[\psi]$  be as in \eqref{Q}.
Then
there is a positive constant $M$ such that
\[
|Q[\psi](r_1\xi)-Q[\psi](r_2\xi)|
\leq
M\|\psi\|_{\mathscr{L}_{\omega_{2}}(\mathbb{T}),s}\omega_2(r_1-r_2)
\]
for all $\xi \in \mathbb{T}$ and $r_1, r_2$
with $0\leq r_2<r_1<1$.
\end{Lem}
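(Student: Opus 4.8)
The plan is to estimate the radial difference $|Q[\psi](r_1\xi)-Q[\psi](r_2\xi)|$ by integrating $\Lambda_{Q[\psi]}$ along the radial segment $[r_2\xi,r_1\xi]\subset\mathbb{D}$, splitting into two cases according to whether the two radii are far apart or close relative to the boundary distance $1-r_2$. The two ingredients already at hand are Lemma \ref{Lipshitz-lemma}, which bounds $|Q[\psi](r\xi)|$ by a constant multiple of $\|\psi\|_{\mathscr{L}_{\omega_{2}}(\mathbb{T}),s}\,\omega_{2}(1-r)$, and Lemma \ref{Q-derivative}, which bounds $\Lambda_{Q[\psi]}(r\xi)$ by a constant multiple of $\|\psi\|_{\mathscr{L}_{\omega_{2}}(\mathbb{T}),s}\,\omega_{2}(1-r)/(1-r)$. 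Throughout I set $h=r_1-r_2$ and $s=1-r_2$, so that $0<h<s$.

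\textbf{Case 1:} $h\geq s/2$. Here the two points are comparatively far apart, so I would simply use the triangle inequality together with the pointwise bound of Lemma \ref{Lipshitz-lemma}. Since $\omega_2$ is increasing and $1-r_1\leq 1-r_2=s$, this gives
$$|Q[\psi](r_1\xi)-Q[\psi](r_2\xi)|\leq |Q[\psi](r_1\xi)|+|Q[\psi](r_2\xi)|\leq 2M\|\psi\|_{\mathscr{L}_{\omega_{2}}(\mathbb{T}),s}\,\omega_2(s).$$
Because $s\leq 2h$ and $\omega_2(ct)\leq c\,\omega_2(t)$ for $c\geq 1$, one has $\omega_2(s)\leq\omega_2(2h)\leq 2\omega_2(h)=2\omega_2(r_1-r_2)$, which yields the desired estimate in this case.

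\textbf{Case 2:} $h< s/2$. Now the points are close, and I would integrate the gradient bound of Lemma \ref{Q-derivative}:
$$|Q[\psi](r_1\xi)-Q[\psi](r_2\xi)|\leq\int_{r_2}^{r_1}\Lambda_{Q[\psi]}(r\xi)\,dr\leq M\|\psi\|_{\mathscr{L}_{\omega_{2}}(\mathbb{T}),s}\int_{r_2}^{r_1}\frac{\omega_2(1-r)}{1-r}\,dr.$$
Since $\omega_2(t)/t$ is non-increasing, the integrand $\omega_2(1-r)/(1-r)$ is non-decreasing in $r$, hence is maximized at $r=r_1$, where $1-r_1=s-h$; therefore the integral is at most $h\,\omega_2(s-h)/(s-h)$. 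The assumption $h<s/2$ forces $h<s-h$, and the monotonicity of $\omega_2(t)/t$ then gives $\omega_2(s-h)/(s-h)\leq\omega_2(h)/h$, so the right-hand side is at most $M\|\psi\|_{\mathscr{L}_{\omega_{2}}(\mathbb{T}),s}\,\omega_2(h)=M\|\psi\|_{\mathscr{L}_{\omega_{2}}(\mathbb{T}),s}\,\omega_2(r_1-r_2)$.

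Combining the two cases yields the stated inequality. The point deserving care — and the reason the case split is forced upon us — is that $\omega_2$ is assumed to satisfy only the ``slow''-type condition \eqref{HL-condition}, not the ``fast'' condition $\int_0^{\nu}\omega_2(t)/t\,dt\leq M\omega_2(\nu)$; consequently the radial integral of $\omega_2(1-r)/(1-r)$ cannot be controlled by $\omega_2(r_1-r_2)$ directly. The device that circumvents this is to exploit, in the near regime, the monotonicity of $t\mapsto\omega_2(t)/t$ to trade the factor $\omega_2(s-h)/(s-h)$ for $\omega_2(h)/h$, which is precisely what converts the gradient estimate into a bound in terms of $\omega_2(r_1-r_2)$. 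I expect this monotonicity trade-off, and the correct choice of the threshold $s/2$ for the case split, to be the only genuinely nontrivial step; the remaining computations are routine.
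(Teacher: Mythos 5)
Your proof is correct and is essentially the paper's own argument: your threshold $h\geq s/2$ is exactly the paper's case split $1-r_1\leq r_1-r_2$, Case 1 uses Lemma \ref{Lipshitz-lemma} with the triangle inequality in both, and Case 2 integrates the bound of Lemma \ref{Q-derivative} along the radial segment and then trades $\omega_2(1-r_1)/(1-r_1)$ for $\omega_2(r_1-r_2)/(r_1-r_2)$ via the monotonicity of $\omega_2(t)/t$, just as the paper does.
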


\begin{proof}
First, assume that $1-r_1\leq r_1-r_2$.
Then, we have $$1-r_2=(1-r_1)+(r_1-r_2)\leq 2(r_1-r_2).$$
Therefore, by Lemma \ref{Lipshitz-lemma},
there is a positive constant $M$ such that
\beqq
|Q[\psi](r_1\xi)-Q[\psi](r_2\xi)|
&\leq&
|Q[\psi](r_1\xi)|+|Q[\psi](r_2\xi)|
\\
&\leq&
M\|\psi\|_{\mathscr{L}_{\omega_{2}}(\mathbb{T}),s}(\omega(1-r_1)+\omega(1-r_2))
\\
&\leq &
M\|\psi\|_{\mathscr{L}_{\omega_{2}}(\mathbb{T}),s}(\omega(r_1-r_2)+\omega(2(r_1-r_2))
\\
&\leq&
3M\|\psi\|_{\mathscr{L}_{\omega_{2}}(\mathbb{T}),s}\omega(r_1-r_2).
\eeqq

Next, we consider the case $1-r_1> r_1-r_2$.
Since $\frac{1-r_1}{r_1-r_2}>1$,
by using Lemma \ref{Q-derivative},
there is a positive constant $M$ such that
\beqq
|Q[\psi](r_1\xi)-Q[\psi](r_2\xi)|
&\leq&
\int_{[r_{1}\xi,r_{2}\xi]}\Lambda_{Q[\psi]}(z)|dz|
\\
&\leq&
M\|\psi\|_{\mathscr{L}_{\omega_{2}}(\mathbb{T}),s}\frac{r_1-r_2}{1-r_1}\omega_2(1-r_1)
\\
&\leq&
M\|\psi\|_{\mathscr{L}_{\omega_{2}}(\mathbb{T}),s}\omega_2(r_1-r_2).
\eeqq
This completes the proof.
\end{proof}

\begin{Lem}
\label{Lipshitz3-lemma}
Let $\omega_{2}$ be a majorant which satisfies \eqref{HL-condition}
and $\psi\in\mathscr{L}_{\omega_{2}}(\mathbb{T})$.
Let $Q[\psi]$  be as in \eqref{Q}.
Then
there is a positive constant $M$ such that
\[
|Q[\psi](z_1)-Q[\psi](z_2)|
\leq
M\|\psi\|_{\mathscr{L}_{\omega_{2}}(\mathbb{T}),s}\omega_2(|z_1-z_2|)
\]
for all $z_1, z_2 \in \mathbb{D}$ with $|z_1|=|z_2|$.
\end{Lem}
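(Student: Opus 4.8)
The plan is to reduce to the two preceding lemmas by a dichotomy on the size of $|z_1-z_2|$ relative to the boundary distance $1-r$, exactly mirroring the radial argument of Lemma~\ref{Lipshitz2-lemma}. Write $z_1=re^{i\theta_1}$ and $z_2=re^{i\theta_2}$ with $r=|z_1|=|z_2|\in[0,1)$, and set $d=|z_1-z_2|$. I would split into the case $1-r\le d$ (the points are far apart compared with their distance to $\mathbb{T}$) and the case $1-r>d$ (the points are close).

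In the first case, $1-r\le d$, I would simply use the triangle inequality together with the pointwise bound of Lemma~\ref{Lipshitz-lemma}:
$$
|Q[\psi](z_1)-Q[\psi](z_2)|\le |Q[\psi](z_1)|+|Q[\psi](z_2)|\le 2M\|\psi\|_{\mathscr{L}_{\omega_{2}}(\mathbb{T}),s}\,\omega_2(1-r).
$$
Since $\omega_2$ is increasing and $1-r\le d$, we have $\omega_2(1-r)\le\omega_2(d)=\omega_2(|z_1-z_2|)$, which yields the claim in this case.

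In the second case, $1-r>d$, I would integrate the maximal directional derivative along the chord $[z_1,z_2]$, which lies in $\mathbb{D}$ by convexity and, being a chord of the circle of radius $r$, satisfies $|z|\le r$ at every point $z\in[z_1,z_2]$. Hence $1-|z|\ge 1-r$, and the monotonicity of $t\mapsto\omega_2(t)/t$ gives $\omega_2(1-|z|)/(1-|z|)\le\omega_2(1-r)/(1-r)$; combined with Lemma~\ref{Q-derivative} this produces the uniform bound $\Lambda_{Q[\psi]}(z)\le M\|\psi\|_{\mathscr{L}_{\omega_{2}}(\mathbb{T}),s}\,\omega_2(1-r)/(1-r)$ along the chord. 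Integrating and using that the chord has length exactly $d$,
$$
|Q[\psi](z_1)-Q[\psi](z_2)|\le\int_{[z_1,z_2]}\Lambda_{Q[\psi]}(z)\,|dz|\le M\|\psi\|_{\mathscr{L}_{\omega_{2}}(\mathbb{T}),s}\,\frac{\omega_2(1-r)}{1-r}\,d.
$$
A final application of the monotonicity of $\omega_2(t)/t$, now with $d\le 1-r$, gives $\frac{\omega_2(1-r)}{1-r}\,d\le\omega_2(d)$, completing this case.

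The argument is largely routine; the one point requiring care — and the step I expect to be the main obstacle — is obtaining the uniform gradient bound along the chord. One must verify that every point of $[z_1,z_2]$ has modulus at most $r$ (so that $1-|z|$ is bounded below by $1-r$) and then apply the majorant monotonicity in the correct direction to convert the $z$-dependent estimate of Lemma~\ref{Q-derivative} into the constant bound $\omega_2(1-r)/(1-r)$. Alternatively, one may integrate along the circular arc of radius $r$ joining $z_1$ to $z_2$, where the bound of Lemma~\ref{Q-derivative} is already uniform, at the cost of comparing the arc length $r|\theta_1-\theta_2|$ to the chord length $d$ via $\sin\vartheta\ge 2\vartheta/\pi$; either route reaches the same conclusion.
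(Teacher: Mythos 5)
Your proposal is correct and takes essentially the same route as the paper's own proof: the identical dichotomy on $1-|z_1|$ versus $|z_1-z_2|$, handling the far case by the triangle inequality with Lemma~\ref{Lipshitz-lemma} and monotonicity of $\omega_2$, and the close case by integrating the bound of Lemma~\ref{Q-derivative} along the chord $[z_1,z_2]$ and using that $\omega_2(t)/t$ is non-increasing. If anything, you are slightly more careful than the paper, which leaves implicit the step you highlight (that $|z|\le r$ on the chord, so the pointwise gradient bound can be replaced by the uniform bound $\omega_2(1-r)/(1-r)$ before integrating).
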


\begin{proof}
First, assume that $1-|z_1|\leq |z_1-z_2|$.
Then, in view of Lemma \ref{Lipshitz-lemma},
we deduce that
there is a positive constant $M$ such that
\beqq
|Q[\psi](z_1)-Q[\psi](z_2)|
&\leq&
|Q[\psi](z_1)|+|Q[\psi](z_2)|
\\
&\leq&
M\|\psi\|_{\mathscr{L}_{\omega_{2}}(\mathbb{T}),s}(\omega_2(1-|z_1|)+\omega_2(1-|z_2|))
\\
&\leq&
2M\|\psi\|_{\mathscr{L}_{\omega_{2}}(\mathbb{T}),s}\omega_2(|z_1-z_2|).
\eeqq

Next, we consider the case $1-|z_1|>|z_1-z_2|$.
Since $\frac{1-|z_1|}{|z_1-z_2|}>1$,
by using Lemma \ref{Q-derivative},
there is a positive constant $M$ such that
\beqq
|Q[\psi](z_1)-Q[\psi](z_2)|
&\leq&
\int_{[z_1,z_2]}\Lambda_{Q[\psi]}(z)|dz|
\\
&\leq&
M\|\psi\|_{\mathscr{L}_{\omega_{2}}(\mathbb{T}),s}\frac{|z_1-z_2|}{1-|z_1|}\omega_2(1-|z_1|)
\\
&\leq&
M\|\psi\|_{\mathscr{L}_{\omega_{2}}(\mathbb{T}),s}\omega_2(|z_1-z_2|).
\eeqq
This completes the proof.
\end{proof}

\subsection{The proof of Theorem \ref{thm-1}} It follows from (\ref{eq-ch-3}) that the solutions to  (\ref{equat-1}) are given by

\be\label{eq-ch-t10}
f(z)=P[\psi](z)+Q[\psi](z)-J_{2}(z)-J_{3}(z),
\ee
where  $Q[\psi]$ is as in \eqref{Q},
$$J_{2}(z)=\frac{1-|z|^{2}}{2\pi}\int_{0}^{2\pi}P(z,e^{it})\varphi(e^{it})e^{-it}dt$$ and $$J_{3}(z)=\frac{1}{16\pi}\int_{\mathbb{D}}g(w)G(z,w)dA(w).$$

\noindent $\mathbf{Step~1.}$ We first show that there is a positive constant $M$  such that
\be\label{eq-q1}| Q[\varphi](z_{1})- Q[\varphi](z_{2})|\leq
 M\omega_{2}(|z_{1}-z_{2}|),~z_{1},~z_{2}\in\mathbb{D}.\ee

We may assume that $|z_1|\geq |z_2|$ with $z_1\neq z_2$.
If $z_2=tz_1$ for some $t\in [0,1]$, then the result follows from Lemma \ref{Lipshitz2-lemma}.
So, it suffices to consider the case $z_2$ is not contained in the segment between $0$ and $z_1$.
Let $w=\frac{|z_1|}{|z_2|}z_2$.
Then
$$|z_2-w|\leq |z_1-z_2|~\mbox{and}~|z_1-w|\leq 2|z_1-z_2|,$$
which, together with Lemmas \ref{Lipshitz2-lemma} and \ref{Lipshitz3-lemma}, imply that there
is a positive constant $M$ such that
\beqq
\left| Q[\psi](z_1)-Q[\psi](z_2)\right|
&\leq&
\left| Q[\psi](z_1)-Q[\psi](w)\right|
+\left|Q[\psi](w)-Q[\psi](z_2)\right|
\\
&\leq&
M(\omega_2(|z_1-w|)+\omega_2(|w-z_2|))
\\
&\leq&
3M\omega_2(|z_1-z_2|),
\eeqq
which implies (\ref{eq-q1}).

\noindent $\mathbf{Step~2.}$
We will show that there is a positive constant $M$  such that
\be\label{eq-q2}|J_{2}(z_{1})-J_{2}(z_{2})|\leq M\omega_{1}(|z_{1}-z_{2}|),~z_{1},~z_{2}\in\mathbb{D}.\ee

Elementary calculations lead to
\[
\partial_{z}P(z,e^{it})=\frac{e^{it}}{(e^{it}-z)^2}
\]
and
\[
\partial_{\overline{z}}P(z,e^{it})=\frac{e^{-it}}{(e^{-it}-\overline{z})^2},
\]
which imply that
\beqq
\Lambda_{J_{2}}(z)&\leq&\|\varphi\|_{\infty}\frac{|z|}{\pi}\int_{0}^{2\pi}P(z,e^{it})dt\\
&&+\|\varphi\|_{\infty}\frac{1-|z|^{2}}{2\pi}\int_{0}^{2\pi}\left(|\partial_{z}P(z,e^{it})|+|\partial_{\overline{z}}P(z,e^{it})|\right)dt\\
&=&\|\varphi\|_{\infty}\frac{1+|z|}{\pi}\int_{0}^{2\pi}P(z,e^{it})dt\\
&\leq&
4\|\varphi\|_{\infty},
\eeqq
where $\|\varphi\|_{\infty}=\sup_{\zeta\in \mathbb{T}}|\varphi(\zeta)|$.
Then, in view of $\omega_{1}(|z_{1}-z_{2}|)/|z_{1}-z_{2}|\geq\omega_{1}(2)/2$, we have
\beqq
|J_{2}(z_{1})-J_{2}(z_{2})|&\leq&\int_{[z_{1},z_{2}]}\Lambda_{J_{2}}(z)|dz|
\\
&\leq&
4\|\varphi\|_{\infty}|z_{1}-z_{2}|\\
&\leq&
\frac{8\|\varphi\|_{\infty}}{\omega_{1}(2)}\omega_{1}(|z_{1}-z_{2}|).
\eeqq

\noindent $\mathbf{Step~3.}$ At last, we prove  that there is a positive constant $M$  such that
\be\label{eq-q3}|J_{3}(z_{1})-J_{3}(z_{2})|\leq M\omega_{1}(|z_{1}-z_{2}|),~z_{1},~z_{2}\in\mathbb{D}.\ee
By \cite[Lemma 2.5]{LP}, we have
\be\label{eq-o3}
\Lambda_{J_{3}}(z)\leq\frac{23}{48}\|g\|_{\infty},
\ee
where $\|g\|_{\infty}=\sup_{z\in\mathbb{D}}|g(z)|$.
Then, by (\ref{eq-o3}), we have
\beqq
|J_{3}(z_{1})-J_{3}(z_{2})|\leq\int_{[z_{1},z_{2}]}\Lambda_{J_{3}}(z)|dz|\leq\frac{23}{24\omega_{1}(2)}\|g\|_{\infty}\omega_{1}(|z_{1}-z_{2}|).
\eeqq
Therefore, (\ref{Lip-1}) follows from (\ref{eq-ch-t10}), (\ref{eq-q1}), (\ref{eq-q2}), (\ref{eq-q3}) and Theorem B.
The proof of this theorem is complete.
\qed

\begin{Lem}
\label{varphi-Lipschitz}
Let $\omega$ be a majorant.
Then, for a function $\varphi$ of $\mathbb{T}$ into $\mathbb{C}$,
$\varphi_{1}\in \mathscr{L}_{\omega}(\mathbb{T})$ if and only if $\varphi\in \mathscr{L}_{\omega}(\mathbb{T})$.
\end{Lem}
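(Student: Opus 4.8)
The plan is to observe that $\varphi_1$ and $\varphi$ differ only by multiplication by the unimodular function $\zeta\mapsto\overline{\zeta}$, and that the Lipschitz space $\mathscr{L}_{\omega}(\mathbb{T})$ is stable under multiplication by bounded functions that themselves lie in $\mathscr{L}_{\omega}(\mathbb{T})$. Since $\varphi_{1}(\zeta)=\varphi(\zeta)\overline{\zeta}$ and $\varphi(\zeta)=\varphi_{1}(\zeta)\zeta$ for $\zeta\in\mathbb{T}$, the two implications are completely symmetric, so I would prove only that $\varphi\in\mathscr{L}_{\omega}(\mathbb{T})$ implies $\varphi_{1}\in\mathscr{L}_{\omega}(\mathbb{T})$, and note that the reverse implication is identical with $\overline{\zeta}$ replaced by $\zeta$.

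First I would record two elementary facts. Since $\mathbb{T}$ is compact and every function in $\mathscr{L}_{\omega}(\mathbb{T})$ is continuous, $\varphi$ is bounded, say with $\|\varphi\|_{\infty}<\infty$. Second, because $\omega(t)/t$ is non-increasing and $|\zeta_{1}-\zeta_{2}|\leq 2$ for $\zeta_{1},\zeta_{2}\in\mathbb{T}$, we have $|\zeta_{1}-\zeta_{2}|\leq (2/\omega(2))\,\omega(|\zeta_{1}-\zeta_{2}|)$; in particular both the identity map and the conjugation map on $\mathbb{T}$ belong to $\mathscr{L}_{\omega}(\mathbb{T})$ with $\mathscr{L}_\omega$-norm at most $2/\omega(2)$ and sup-norm $1$.

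Then for $\zeta_{1},\zeta_{2}\in\mathbb{T}$ I would use the splitting
$$
\varphi_{1}(\zeta_{1})-\varphi_{1}(\zeta_{2})=\bigl(\varphi(\zeta_{1})-\varphi(\zeta_{2})\bigr)\overline{\zeta_{1}}+\varphi(\zeta_{2})\bigl(\overline{\zeta_{1}}-\overline{\zeta_{2}}\bigr),
$$
and estimate the two terms separately. The first is controlled by $|\overline{\zeta_{1}}|=1$ together with $\varphi\in\mathscr{L}_{\omega}(\mathbb{T})$, giving the bound $\|\varphi\|_{\mathscr{L}_{\omega}(\mathbb{T}),s}\,\omega(|\zeta_{1}-\zeta_{2}|)$. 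The second is controlled by $|\varphi(\zeta_{2})|\leq\|\varphi\|_{\infty}$ together with $|\overline{\zeta_{1}}-\overline{\zeta_{2}}|=|\zeta_{1}-\zeta_{2}|\leq(2/\omega(2))\,\omega(|\zeta_{1}-\zeta_{2}|)$. Adding the two estimates yields
$$
|\varphi_{1}(\zeta_{1})-\varphi_{1}(\zeta_{2})|\leq M\,\omega(|\zeta_{1}-\zeta_{2}|),\qquad M=\|\varphi\|_{\mathscr{L}_{\omega}(\mathbb{T}),s}+\tfrac{2}{\omega(2)}\|\varphi\|_{\infty},
$$
which is exactly the assertion $\varphi_{1}\in\mathscr{L}_{\omega}(\mathbb{T})$.

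I do not expect any genuine obstacle: the statement is a stability property of Lipschitz-type spaces under multiplication by a smooth unimodular factor, and the only place where the majorant hypothesis enters is the comparison $|\zeta_{1}-\zeta_{2}|\leq(2/\omega(2))\,\omega(|\zeta_{1}-\zeta_{2}|)$ on the bounded set $\mathbb{T}$, which follows at once from the monotonicity of $\omega(t)/t$.
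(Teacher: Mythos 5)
Your proof is correct and follows essentially the same route as the paper: the same splitting $\varphi_{1}(\zeta_{1})-\varphi_{1}(\zeta_{2})=(\varphi(\zeta_{1})-\varphi(\zeta_{2}))\overline{\zeta_{1}}+\varphi(\zeta_{2})(\overline{\zeta_{1}}-\overline{\zeta_{2}})$, the same use of boundedness of $\varphi$ together with the comparison $|\zeta_{1}-\zeta_{2}|\leq(2/\omega(2))\,\omega(|\zeta_{1}-\zeta_{2}|)$ coming from the monotonicity of $\omega(t)/t$, and the same appeal to symmetry for the converse. Your write-up is slightly more explicit about the constant and why the reverse direction is identical, but the argument is the paper's.
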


\begin{proof}
First, assume that $\varphi\in \mathscr{L}_{\omega}(\mathbb{T})$.
Then, for $\xi, \zeta \in \mathbb{T}$, we have
\begin{align*}
|\varphi_1(\xi)-\varphi_1(\zeta)|
&=
|\varphi(\xi)\overline{\xi}-\varphi(\zeta)\overline{\zeta}|
\\
&\leq
|\varphi(\xi)-\varphi(\zeta)|+|\varphi(\zeta)| |\overline{\xi}-\overline{\zeta}|
\\
&\leq
M\omega(|\xi-\zeta|)+\|\varphi\|_{\infty}\frac{2}{\omega(2)}\omega(|\xi-\zeta|),
\end{align*}
which implies that $\varphi_{1}\in \mathscr{L}_{\omega}(\mathbb{T})$.

The proof of the converse is similar.
This completes the proof.
\end{proof}

\subsection{The proof of Theorem \ref{thm-2}}
The proof of this theorem follows from Theorem \ref{thm-1}  and Lemma \ref{varphi-Lipschitz}.
\qed

\section*{Statements and Declarations}

\subsection*{Funding}
This work of the second author was supported by
Japan Society for the Promotion of Science KAKENHI Grant Number JP22K03363.

\subsection*{Data Availability Statement}
Data sharing not applicable to this article as no datasets were generated or analysed during the current study.

\subsection*{Conflict of interest}
The authors have no Conflict of interest to declare that are relevant to the content of this article.




\end{document}